\newtheorem{Theorem}{Theorem}[section]
\newtheorem{Lemma}[Theorem]{Lemma}
\newtheorem{Corollary}[Theorem]{Corollary}
\newtheorem{Remark}[Theorem]{Remark}
\numberwithin{equation}{section} \allowdisplaybreaks
\renewcommand\abstract{{\bf Abstract}}
\begin{document}
\title{On the number and geometric location of critical points of solutions to a semilinear elliptic equation in annular domains \footnote{\footnotesize The work is supported by National Natural Science Foundation of China (No.12001276, No.12071219, No.12090023).}}

\author{Haiyun Deng$^{1}$, Hairong Liu$^{2}$, Xiaoping Yang$^{3}$\footnote{E-mail: hydeng@nau.edu.cn(H. Deng), hrliu@njust.edu.cn(H. Liu), xpyang@nju.edu.cn(X. Yang)}\\[12pt]
 \emph {\scriptsize $^{1}$Department of Applied Mathematics, Nanjing Audit University, Nanjing, 211815, China;}\\
 \emph {\scriptsize $^{2}$School of Mathematics and Statistics, Nanjing University of Science and Technology, Nanjing, 210094, China;}\\
 \emph {\scriptsize $^{3}$Department of Mathematics, Nanjing University, Nanjing, 210093, China}}
\date{}
\maketitle

\renewcommand{\labelenumi}{[\arabic{enumi}]}

\begin{abstract}{\bf:}{\footnotesize
   ~In this paper, one of our aims is to investigate the instability of the distribution of the critical point set $\mathcal{C}(u)$ of a solution $u$ to a semilinear equation with Dirichlet boundary condition in the planar annular domains. Precisely, we prove that $\mathcal{C}(u)$ in an eccentric circle annular domain, or a petal-like domain, or an annular domain where the interior and exterior boundaries are equally scaled ellipses contains only finitely many points rather than a Jordan curve. This result indicates that the critical point set $\mathcal{C}(u)$ is unstable when any boundary of planar concentric circle annular domain $\Omega$ has some small deformation or minor perturbation. Based on studying the distribution of the nodal sets $u^{-1}_\theta(0)(u_\theta=\nabla u\cdot \theta)$ and $u^{-1}(0)$, we prove that the solution $u$ on each symmetric axis has exactly two critical points under some conditions. Meanwhile, we further obtain that $\mathcal{C}(u)$ only has two critical points in an eccentric circle annular domain, has four critical points in an exterior petal-like domain with the exterior boundary $\gamma_E$ is an ellipse, and the maximum points are distributed on the long symmetric semi-axis and the saddle points on the short symmetric semi-axis. Moreover, we describe the geometric location of critical points of the solution $u$ by the moving plane method.}
\end{abstract}

{\bf Key Words:} critical points; instability; symmetric axis; geometric distribution; moving plane method

{{\bf 2010 Mathematics Subject Classification.} Primary: 35B38, 35J05; Secondary: 35J25.}

\section{Introduction and main results}
~~~~A critical point of the function $u$ is a point in domain $\Omega$ at which the gradient $\nabla u$ of the function $u$ is the zero vector. Critical points play an important role in understanding the properties of solutions to partial differential equations not only in physical aspects but also in geometrical and analytical aspects. At an elementary level, they can help us to visualize the graph of $u$, since they are some of its notable points (local maximum, minimum, or saddle points of $u$). At a more sophisticated level, if we interpret $u$ and $\nabla u$ as a gravitational, electrostatic or velocity potential and its underlying field of force or flow, the critical points are the positions of equilibrium for the field of force or stagnation points for the flow and give information on the topology of the equipotential lines or of the curves of steepest descent (or stream lines) related to $u$ \cite{Magnanini}.

It is well known that the critical point set of a solution to Poisson equation $\triangle u=1$ with zero Dirichlet boundary condition in a planar concentric circle annular domain is exactly a circle \cite{Arango,Magnanini}. The following question is naturally raised. Assume that $u$ is a non-constant solution of the following semilinear elliptic equation with Dirichlet boundary condition
\begin{equation}\label{1.1}\begin{array}{l}
\left\{
\begin{array}{l}
\triangle u+f(u)=0~~\mbox{in}~~\Omega\subset\mathbb{R}^{2},\\
u=0 ~~\mbox{on}~~\partial\Omega,
\end{array}
\right.
\end{array}\end{equation}
which conditions on function $f$ and domain $\Omega$ guarantee the existence or absence of critical Jordan curves in the critical ponit set $\mathcal{C}(u)$?

In this paper, we concentrate ourselves on the above problem and related topics. We will show that the critical point set $\mathcal{C}(u)$ is unstable only if the domain has minor changes. Actually we prove that $\mathcal{C}(u)$ merely contains finitely many isolated points rather than a Jordan curve once the shape of the concentric circle annular domain has been any deformed. We further studies and describe the concrete number and geometric location of critical points. The number, geometric location and types of the critical points of solutions to elliptic equations in planar non-convex domains is a classic and fascinating research topic. In 1992, Alessandrini and Magnanini \cite{AlessandriniMagnanini1} considered the geometric distribution of the critical point set of solution $u\in C^1(\overline{\Omega})\cap C^2(\Omega)$ of the Dirichlet problem: $\Delta u=0~\mbox{in}~\Omega,u=a_j~\mbox{on}~\Gamma_j,j=1,\cdots,N,$ where $a_1,\cdots,a_N$ do not all coincide and $\Omega$ is a bounded open set in the plane and its boundary $\partial\Omega$ is composed of $N$ simple closed curves $\Gamma_1,\cdots,\Gamma_N,N\geq 1$ of class $C^{1,\alpha}.$ They deduced that the critical point set consists of finite isolated critical points. In 2011, Arango and G\'{o}mez \cite{Arango} studied the critical points of solution $u$ to (\ref{1.1}) under the case that $\Omega$ is a planar bounded smooth domain and $f$ is an increasing, real analytic function. They showed that the critical point set of solution $u$ is made up of finitely many isolated points and finitely many Jordan curves, and present an example with two critical points in a planar annular domain, and gave further results under the case that $\Omega$ is an annular domain whose boundaries have nonzero curvature. In 2022, Gladiali and Grossi \cite{GladialiGrossi} constructed a class of non-convex planar simply connected domains such that the solution $u$ of Poisson equation $-\Delta u=1$  with zero Dirichlet boundary condition can have any number of maximum points.  Recently, in \cite{Deng2,Deng3}, we  investigated the distribution of critical and singular points of solutions $u$ to a quasilinear elliptic equations and a kind of linear elliptic equations with nonhomogeneous Dirichlet boundary conditions in a multiply connected domain $\Omega$ in $\mathbb{R}^2$, respectively.

 In higher dimensional space, as far as we know, there are few works about the geometric distribution of critical points of solutions to elliptic equations. Under the assumption of the existence of a semi-stable solution $u$ to problem (\ref{1.1}), Cabr\'{e} and Chanillo \cite{CabreChanillo} proved that the positive solution $u$ has a unique non-degenerate critical point in bounded smooth convex domains of revolution with respect to an axis in $\mathbb{R}^{n}(n\geq3)$. In 2017, Alberti, Bal and Di Cristo \cite{Alberti} investigated the existence of critical points for solutions to second-order elliptic equations of the form $\nabla\cdot(\sigma(x)\nabla u)=0$ over a bounded domain with prescribed boundary conditions in $\mathbb{R}^{n}(n\geq3).$ In 2019, Deng, Liu and Tian \cite{Deng1} considered the geometric distribution of critical points of solution to mean curvature equations with Dirichlet boundary condition on some special symmetric domains in $\mathbb{R}^{n}(n\geq3)$ by the projection method. In 2020, Grossi and Luo \cite{GrossiLuo} studied the number and geometric location of critical points of positive solutions of semilinear elliptic equations  $\triangle u+f(u)=0$ with $u_{\partial\Omega_\varepsilon}=0$ in domains $\Omega_\varepsilon=\Omega\setminus B(P,\varepsilon)$, where $\Omega\subset\mathbb{R}^n(n\geq 3)$ is a smooth bounded domain. In fact, they mainly settled the relationship between the number and geometric location of critical points of solutions in domain $\Omega_\varepsilon$ and domain $\Omega$. For other related results see \cite{Alessandrini,AlessandriniMagnanini2,Cheeger,Chen,De,Deng4,GrossiIanni,Jerison,Lin2022,Naber,Pacella} and references therein.

In this paper, we show the number and geometric location of the critical point sets $\mathcal{C}(u)$ of solutions to a semilinear elliptic equation in three kinds of planar annular domains. Before stating the main results, we give some necessary definition.

\vspace{0.2cm}
\noindent {\bf Definition 1.} {\it
 (exterior petal-like domain) We say that $\Omega\subset\mathbb{R}^2$ is an exterior petal-like domain, if $\Omega$ is a concentric annular domain with the interior boundary $\gamma_I$ and the exterior boundary $\gamma_E$ such that $\gamma_E$ is not a circle, and satisfies that\\
 (1) exterior boundary $\gamma_E$ has $k\geq 2$ symmetric axes and these $k$ symmetric axes are equiangular, denote by $l_1, l_2, \cdots, l_k$;\\
 (2) $\Omega_{\gamma_E}$ is $l_i$-convex for any $i=1,2,\cdots,k$, where $\Omega_{\gamma_E}$ is the domain bounded by the exterior boundary $\gamma_E$; \\
 (3) exterior boundary $\gamma_E$ satisfies the inner cone condition;\\
 (4) the interior boundary $\gamma_I$ is a circle whose center coincides with the center of $\Omega_{\gamma_E}$.}
 \vspace{0.1cm}

Similarly, we can define the interior petal-like domain. In this paper, all the conclusions on an exterior petal-like domain are also valid for an interior petal-like domain, and the proof method is similar, which will not be explained later.  For the instability of the critical point sets, we have the following result.

\vspace{0.2cm}
\noindent {\bf Theorem A.} {\it
Let $u$ be a non-constant solution to (\ref{1.1}). Assume that $\Omega\subset \mathbb{R}^2$ is one of the following domains: an eccentric circle annular domain, an exterior petal-like domain and an annular domain where the interior and exterior boundaries are equally scaled ellipses. If $f(u)\geq 0$ is a decreasing and real analytic function, then the critical point set $\mathcal{C}(u)$ of $u$ does not contain any Jordan curves.}
\vspace{0.2cm}

\vspace{0.2cm}
\noindent{\bf Theorem B.} {\it
Let $u$ be a non-constant solution to (\ref{1.1}), $f(u)\geq 0$ be a decreasing and real analytic function. Assume that $\Omega\subset\mathbb{R}^2$ is one of the following three kinds of domains:\\
  (i) an eccentric circle annular domain; \\
  (ii) a smooth exterior petal-like domain with even number axes of symmetry satisfied that the symmetric axes are perpendicular in pairs and the exterior boundary has nonzero curvature; \\
  (iii) an annular domain where the interior and exterior boundaries are both ellipses. \\
  Then on each symmetric axis $u$ has exactly two critical points.}
\vspace{0.2cm}

 For the exact number of the critical points of the solution $u$ in domain $\Omega$, our results are as follows:

\vspace{0.2cm}
\noindent{\bf Theorem C.} {\it
Let $u$ be a non-constant solution to (\ref{1.1}), $f(u)\geq 0$ be a decreasing and real analytic function. Assume that $\Omega\subset\mathbb{R}^2$ is one of the following two kinds of domains:\\
  (i) an eccentric circle annular domain; \\
  (ii) an exterior petal-like domain with the exterior boundary $\gamma_E$ is an ellipse. \\
  Then $u$ has exactly two critical points on each symmetric axis, does not have any extra critical points on other sub-domains, and the maximum points are distributed on the long symmetric semi-axis and the saddle points on the short symmetric semi-axis.}
\vspace{0.2cm}

The next theorem describe the geometric location of critical points of a solution $u$ in a particular exterior petal-like domain or an eccentric circle annular domain.

\vspace{0.2cm}
\noindent {\bf Theorem D.}
{\it Let $u$ be a non-constant solution to (\ref{1.1}) and $f(u)\geq 0$ be locally Lipschitz.\\
 {\bf (1)} Suppose that $\Omega\subset\mathbb{R}^2$ is an exterior petal-like domain with the exterior boundary $\gamma_E$ is an ellipse, where the radius of the interior boundary $\gamma_I$ is $a$, and the length of each semi-axis of $\gamma_E$ is $b_i$ ($i=1,2$) respectively. Then all the critical points of the solution $u$ are located within the domain
\begin{eqnarray*}\label{}
\big\{(-\frac{a+b_1}{2},\frac{a+b_1}{2})\times(-\frac{a+b_2}{2},\frac{a+b_2}{2})\big\}\cap\Big\{\Omega\setminus\big\{(x,y):a<\sqrt{x^2+y^2}\leq \sqrt{ac}\big\}\Big\},
\end{eqnarray*}
where $c$ is the radius of inscribed circle of $\Omega_{\gamma_E}$, and $\Omega_{\gamma_E}$ is the domain bounded by the exterior boundary $\gamma_E$.\\
 {\bf  (2)} Suppose that $\Omega\subset \mathbb{R}^2$ is an eccentric circle annular domain, where the interior boundary $\gamma_I$ is a circle with its radius $r$ centered at $(a,0)(a>0)$, and the exterior boundary $\gamma_E$ is a circle with its radius $R$ centered at $(0,0)$ such that $a+r<R$. Then all the critical points of the solution $u$ are located within the domain
\begin{eqnarray*}\label{2.5}
\Big\{(\frac{-R+a-r}{2},\frac{a+r+R}{2})\times(-R,R)\Big\}\cap\\
\Big\{\Omega\setminus\big\{(x,y):r<\sqrt{(x-a)^2+y^2}\leq \sqrt{r(R-a)}\big\}\Big\}.
\end{eqnarray*}}
\vspace{0.2cm}

Theorem $A$ indicates that the distribution of critical point set of the solution $u$ is unstable when any boundary of planar concentric circle annular domain $\Omega$ has a slight change, whether it has a small deformation or a very small displacement. The critical point set $\mathcal{C}(u)$ changes from a Jordan curve to finitely many isolated points. This conclusion is quite different from that on simply connected domains. In fact, by Theorem 1 in \cite{GidasNi} and Theorem 1.3 in \cite{ Berestycki}, we know that the critical point set $\mathcal{C}(u)$ of a solution $u$ to (\ref{1.1}) is stable under small domain deformation in a bounded Steiner-symmetric domain $\Omega\subset\mathbb{R}^n(n\geq 2)$, namely, the number, geometric location and non-degeneracy of critical points of the solution $u$ do not change. Our main argument depends on various maximum principles, and the delicate analysis about the distribution of the nodal sets $u_\theta^{-1}(0)$ ($u_\theta=\nabla u\cdot \theta$) and $u^{-1}(0)$. We obtain the exact number of the critical point sets $\mathcal{C}(u)$ of solutions to a semilinear elliptic equation in two kinds of planar annular domains. In addition, we use the interior and exterior boundary information to obtain the geometric location of the critical point set by the moving plane method and the moving sphere method, respectively.

The rest of this article is written as follows. In Section 2, we will prove Theorem A. In detail, we show the instability of the critical point sets $\mathcal{C}(u)$, namely, we prove that $\mathcal{C}(u)$ in an eccentric circle annular domain, or an exterior petal-like domain, or a planar annular domain where the interior and exterior boundaries are equally scaled ellipses contains only finitely many points rather than a Jordan curve. In Sections 3, we will prove Theorem B and Theorem C. In Sections 4, we will prove Theorem D, that is, we describe the geometric location of critical points of the solutions in a planar exterior petal-like domain with the exterior boundary $\gamma_E$ is an ellipse and an eccentric circle annular domain, respectively.

\section{Instability of critical point sets}
~~~~In this section, we will present the proof of Theorem A. In order to prove the main results, we need the following lemma. Relevant results can be seen in \cite{AlessandriniMagnanini1}, for the sake of completeness, in our setting, we will give the following complete proof.

\begin{Lemma}\label{le2.1}
Let $u$ be a non-constant solution to (\ref{1.1}) and $f(u)\geq 0$ be a decreasing and real analytic function. \\
(i) If $\Omega$ is a smooth planar domain, and the critical set $\mathcal{C}(u)$ of solution $u$ contains a Jordan curve $\gamma$ in $\Omega$, then $\Omega$ cannot be simply connected.\\
(ii) If $\Omega$ is a smooth planar annular domain, then $\mathcal{C}(u)$ has at most one Jordan curve $\gamma$, denoted $\Omega_\gamma$ by the inside of critical Jordan curve $\gamma$, and the interior boundary $\gamma_I\subset\Omega_\gamma$.
\begin{proof} (i) We set up the usual contradiction argument. Suppose that $\Omega$ is simply connected. For any direction $\alpha\in S^1$, then the directional derivative $u_\alpha:=\nabla u\cdot\alpha$ satisfies the following boundary value problem
\begin{equation}\label{2.1}\begin{array}{l}
\left\{
\begin{array}{l}
(\triangle+f'(u))u_\alpha=0~~\mbox{in}~~\Omega_\gamma,\\
u_\alpha=0 ~~\mbox{on}~~\partial\Omega_\gamma,
\end{array}
\right.
\end{array}\end{equation}
where $\Omega_\gamma$ is the domain bounded by the critical Jordan curve $\gamma$. By $f'(u)\leq 0$ and the strong maximum principle, we have that $u_\alpha\equiv 0$ in $\Omega_\gamma$ for all direction $\alpha.$ Since $f$ is real analytic, then $u$ and $u_\alpha$ are also analytic. This means that $u_\alpha\equiv 0$ for all direction $\alpha$ in $\Omega$, namely, $u\equiv C$, $C$ is a constant. By the boundary condition of (\ref{1.1}), we have that
$$u\equiv 0~\mbox{in}~\Omega.$$
This is a contradiction, that is, $\Omega$ is not simply connected.\\
(ii) Suppose by contradiction that the critical point set $\mathcal{C}(u)$ has two Jordan curves $\gamma_1,\gamma_2$. By the result of (i), denoted $\Omega_{\gamma_1}$ and $\Omega_{\gamma_2}$ by the inside of critical Jordan curves $\gamma_1$ and $\gamma_2$ respectively, we know that both $\Omega_{\gamma_1}$ and $\Omega_{\gamma_2}$ contain interior boundary $\gamma_I$. Then $\gamma_2\subset\Omega_{\gamma_1}$, $\gamma_1\subset\Omega_{\gamma_2}$, or $\gamma_1$ and $\gamma_2$ have intersections. Without loss of generality, we suppose that $\gamma_2\subset\Omega_{\gamma_1}$. For any direction $\alpha\in S^1$, then $u_\alpha=\nabla u\cdot\alpha$ satisfies the following boundary value problem
\begin{equation*}\label{}\begin{array}{l}
\left\{
\begin{array}{l}
(\triangle+f'(u))u_\alpha=0~~\mbox{in}~~\Omega_{\gamma_1}\setminus\Omega_{\gamma_2},\\
u_\alpha=0 ~~\mbox{on}~~\gamma_1~\mbox{and}~ \gamma_2,
\end{array}
\right.
\end{array}\end{equation*}
Similarly, by the strong maximum principle and the analyticity of $u$ and $u_\alpha$, we can easily obtain that $u\equiv 0~\mbox{in}~\Omega$, this completes the proof of (ii). \end{proof}
\end{Lemma}

If $\Omega$ is a smooth planar exterior petal-like domain, by Hopf Lemma, we know that the solution $u$ of (\ref{1.1}) does not have critical points on $\partial\Omega$. However, if there exists cusps on the exterior boundary $\gamma_E$, there could be critical points on the exterior boundary.

\begin{Lemma}\label{le2.2}
Let $\Omega$ be a planar exterior petal-like domain and $f(u)\geq 0$ be a decreasing and real analytic function. Then any boundary critical point of solution $u$ to (\ref{1.1}) must be a cusp. Moreover, any boundary point is not the convergence point of the critical set $\mathcal{C}(u)$.
\begin{proof} Suppose that $p\in\partial\Omega,$ we need divide the proof into two cases.

 Case 1: If $p\in\partial\Omega$ is not a cusp, by Hopf boundary lemma, we have that
$$\frac{\partial u}{\partial \nu}(p)\neq 0,~\mbox{i.e.},~\nabla u(p)\neq 0,$$
where $\nu$ is the unit outward normal vector of $\partial\Omega$ at $p$, therefore critical points cannot accumulate at $p$.

Case 2: If $p\in\partial\Omega$ is a cusp, suppose by contradiction that there exists a cluster of points $\{p_n\}\subset\mathcal{C}(u)$ with $p_n\rightarrow p.$ By the symmetry of domain $\Omega$, then for every $q\in \{p_n\}$ close enough to $p,$ there exists a straight segment $L$, such that $q\in L$,  $\Omega(L)\subset\Omega$ and $\Omega^*(L)\subset\Omega$, where $\Omega^*(L)$ is the symmetric domain of $\Omega(L)$ with respect to straight segment $L$. See Fig.1.
\begin{center}
  \includegraphics[width=2.7cm,height=3.2cm]{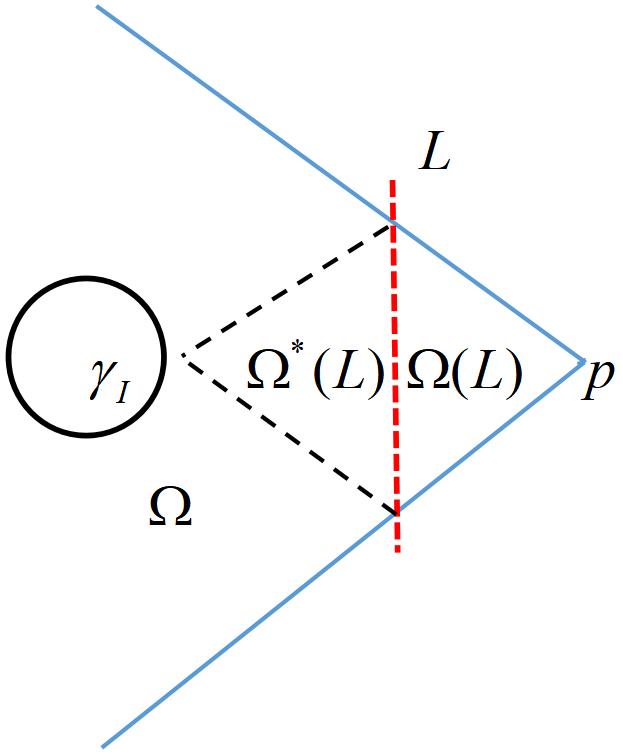}\\
  \scriptsize {\bf Fig. 1.}~~The domain around the corner $p$ of the exterior boundary.
\end{center}
 For every $x\in\Omega^*(L)$, we suppose that $V(x)=u(x)-u(x')$, where $x'$ is the the symmetric point of $x$ with respect to line straight segment $L$. Using the differential mean value theorem, then $V(x)$ satisfies the following boundary problem
\begin{equation}\label{2.2}\begin{array}{l}
\left\{
\begin{array}{l}
(\triangle+f'(\zeta(x)))V(x)=0~~\mbox{in}~~\Omega^*(L),\\
V(x)>0 ~~\mbox{on}~~\partial\Omega^*(L)\setminus L,\\
V(x)=0 ~~\mbox{on}~~L\cap\partial\Omega^*(L),
\end{array}
\right.
\end{array}\end{equation}
where $\zeta(x)$ is a smooth function. By the strong maximum principle, we know that $V(x)>0$ in $\Omega^*(L)$, and by Hopf Lemma, we have that $$\frac{\partial V(x)}{\partial\nu}<0~\mbox{on}~L\cap\partial\Omega^*(L).$$
It means that there does not exist any critical points on $L\cap\partial\Omega^*(L),$ this contradicts $q\in \{p_n\}$ on the line straight segment $L$.\end{proof}
\end{Lemma}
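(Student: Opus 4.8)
The plan is to split the boundary of the exterior petal-like domain into regular points and cusps, and to show in each case that $\nabla u\neq 0$ near such a point. This simultaneously yields both assertions: a boundary critical point cannot be regular (so it must be a cusp), and no boundary point can be an accumulation point of $\mathcal{C}(u)$. Before starting I would record that, since $f(u)\geq 0$, the solution satisfies $\triangle u=-f(u)\leq 0$, i.e. $u$ is superharmonic; combined with $u=0$ on $\partial\Omega$ and the strong maximum principle, this gives $u>0$ in $\Omega$ for a non-constant solution, which is exactly the setting in which the Hopf boundary lemma can be used.

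First I would treat a boundary point $p$ that is not a cusp. At such a point the exterior petal-like domain satisfies an interior ball (or at least an interior cone) condition, so the Hopf boundary lemma applies to the superharmonic $u$: the outward normal derivative satisfies $\partial u/\partial\nu(p)<0$, hence $\nabla u(p)\neq 0$. Since $u$ is $C^1$ up to the regular part of $\partial\Omega$, the inequality $\nabla u\neq 0$ persists in a neighborhood of $p$, so no sequence of critical points can converge to $p$, and in particular $p$ itself is not critical. This already proves that any boundary critical point must occur at a cusp.

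The remaining and harder case is a cusp $p$, where no interior ball exists and Hopf cannot be invoked directly. Here I would argue by contradiction, assuming a sequence $\{p_n\}\subset\mathcal{C}(u)$ with $p_n\to p$, and exploit the symmetry axes of the petal-like domain. A cusp sits at a symmetric position, so for each $q=p_n$ close enough to $p$ one can choose a straight chord $L$ through $q$ cutting off a small cap $\Omega(L)$ at the cusp whose reflection $\Omega^*(L)$ across $L$ lies inside $\Omega$. Setting $V(x)=u(x)-u(x')$, where $x'$ is the reflection of $x$ in $L$, and linearizing via the mean value theorem, $V$ solves $(\triangle+f'(\zeta))V=0$ in $\Omega^*(L)$ with $V=0$ on $L\cap\partial\Omega^*(L)$ and $V>0$ on $\partial\Omega^*(L)\setminus L$; the positive sign on the latter holds because those points are reflections of boundary points of $\Omega$, where $u=0$, whereas $u>0$ in the interior. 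The strong maximum principle then gives $V>0$ in $\Omega^*(L)$, and the Hopf lemma gives $\partial V/\partial\nu<0$ on $L\cap\partial\Omega^*(L)$. Since the reflection symmetry forces $\partial V/\partial\nu=2\,\partial u/\partial\nu$ along $L$, this yields $\nabla u\neq 0$ on $L$, in particular at $q$, contradicting $q\in\mathcal{C}(u)$.

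The main obstacle I anticipate is the geometric construction at the cusp: one must guarantee that for every point $q$ sufficiently near $p$ there is an admissible reflection chord $L$ through $q$ for which the reflected cap $\Omega^*(L)$ is genuinely contained in $\Omega$ and for which the pre-images of $\partial\Omega^*(L)\setminus L$ lie on $\partial\Omega$ (so that $V>0$ there). This is precisely where the structural hypotheses on the exterior petal-like domain — the equiangular symmetry axes $l_1,\dots,l_k$ together with the $l_i$-convexity of $\Omega_{\gamma_E}$ — must enter, and checking the correct boundary sign of $V$ while keeping $L$ admissible uniformly as $q\to p$ is the delicate quantitative heart of the argument.
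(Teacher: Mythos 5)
Your proposal is correct and follows essentially the same route as the paper: Hopf's boundary lemma at non-cusp points, and at a cusp a contradiction argument via reflection of a small cap across a chord $L$ through the putative critical point $q$, with the strong maximum principle and Hopf lemma applied to $V(x)=u(x)-u(x')$ yielding $\nabla u(q)\neq 0$. Your explicit remarks on why $V>0$ on $\partial\Omega^*(L)\setminus L$ and on the identity $\partial V/\partial\nu=2\,\partial u/\partial\nu$ along $L$ simply spell out steps the paper leaves implicit, and your flagged concern about constructing an admissible chord uniformly as $q\to p$ is exactly the point the paper settles by appeal to the symmetry and $l_i$-convexity hypotheses (Fig.~1).
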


By Lemma \ref{le2.2}, we can easily obtain the following results.

\begin{Corollary}\label{cor2.3}
  Under the hypothesis of Lemma \ref{le2.2}, there exists a compact set $\mathbb{W}$ with two smooth boundaries such that $\mathcal{C}(u)\subset\mathbb{W}\subset\Omega$. By Theorem 3.1 \cite{Arango} and Lemma \ref{le2.2}, then the critical point set $\mathcal{C}(u)$ is made up of finitely many isolated points and at most one critical Jordan curve in $\Omega$.
\end{Corollary}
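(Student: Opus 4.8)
The plan is to first use Lemma~\ref{le2.2} to trap the critical set inside a compact annular region sitting strictly inside $\Omega$, and then read off the local structure of $\mathcal{C}(u)$ from the real analyticity of $u$, upgrading ``locally finite'' to ``globally finite'' by compactness and reducing the number of Jordan curves via Lemma~\ref{le2.1}. First I would record that, because $f$ is real analytic, elliptic regularity makes $u$ real analytic in $\Omega$, so $\mathcal{C}(u)=\{x\in\Omega:\nabla u(x)=0\}$ is a relatively closed subset of $\Omega$. By Lemma~\ref{le2.2}, no boundary point of $\Omega$ is an accumulation point of $\mathcal{C}(u)$: non-cusp points are ruled out by the Hopf-lemma argument (Case~1) and cusps by the symmetrization/Hopf argument (Case~2). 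Hence the Euclidean closure satisfies $\overline{\mathcal{C}(u)}\cap\partial\Omega=\emptyset$; since $\overline{\mathcal{C}(u)}$ and $\partial\Omega$ are disjoint compact sets, $\overline{\mathcal{C}(u)}$ is a compact subset of the open annulus $\Omega$ and $d:=\mathrm{dist}(\overline{\mathcal{C}(u)},\partial\Omega)>0$.

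Next I would construct the compact set $\mathbb{W}$. Since the interior boundary $\gamma_I$ is a circle, the concentric circle $\Gamma_1$ obtained by enlarging $\gamma_I$ outward by $d/2$ is smooth, lies in $\Omega$, encloses $\gamma_I$, and stays at distance $\geq d/2$ from $\mathcal{C}(u)$. For the outer boundary I would take a smooth Jordan curve $\Gamma_2\subset\Omega$ contained in the collar $\{x\in\Omega:0<\mathrm{dist}(x,\gamma_E)<d/2\}$ and separating $\mathcal{C}(u)$ from $\gamma_E$; such a curve exists because the critical set lies at distance $\geq d$ from $\gamma_E$. Letting $\mathbb{W}$ be the closed ring bounded by $\Gamma_1$ and $\Gamma_2$, one obtains a compact set with two smooth boundary components such that $\mathcal{C}(u)\subset\mathbb{W}\subset\Omega$, with $\mathcal{C}(u)$ in fact lying in the interior of $\mathbb{W}$ at distance $\geq d/2$ from $\partial\mathbb{W}$.

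Then I would invoke the structure result. On $\mathbb{W}$ the set $\mathcal{C}(u)$ is the common zero set of the real analytic functions $u_x$ and $u_y$, so by Theorem~3.1 of \cite{Arango} it is locally either a set of isolated points or an analytic arc; since $\mathcal{C}(u)$ is contained in the compact interior of $\mathbb{W}$ and bounded away from $\partial\mathbb{W}$, these pieces cannot accumulate and cannot terminate in the interior, whence $\mathcal{C}(u)$ consists of \emph{finitely many} isolated points together with \emph{finitely many} closed analytic (Jordan) curves. Finally, Lemma~\ref{le2.1}(ii) shows that at most one such Jordan curve can occur in the annulus, which gives the asserted description.

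I expect the main obstacle to be the construction of a genuinely smooth outer boundary $\Gamma_2$ near the cusps of $\gamma_E$: the naive offset (parallel) curve of $\gamma_E$ inherits the singular behaviour at the cusps, so a local smoothing argument is required. The positive gap $d$ between $\overline{\mathcal{C}(u)}$ and $\partial\Omega$ is exactly what provides the room to replace the singular offset arc near each of the finitely many cusps by a smooth arc that still lies inside the thin collar and still separates $\mathcal{C}(u)$ from $\gamma_E$; verifying that this can be done while keeping $\Gamma_2$ a single smooth Jordan curve is the delicate point, whereas the finiteness and the reduction to at most one Jordan curve are then immediate from analyticity, compactness, and Lemma~\ref{le2.1}.
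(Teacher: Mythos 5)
Your proposal is correct and follows essentially the same route as the paper, which proves the corollary only implicitly by citing Lemma~\ref{le2.2} (no accumulation of $\mathcal{C}(u)$ at $\partial\Omega$, hence compact containment in a two-boundary set $\mathbb{W}$), Theorem~3.1 of \cite{Arango} (finitely many isolated points and finitely many Jordan curves for the real analytic $u$ on $\mathbb{W}$), and Lemma~\ref{le2.1}(ii) (at most one Jordan curve). Your added care about smoothing the outer curve $\Gamma_2$ near the cusps of $\gamma_E$ is a detail the paper glosses over, but it does not change the argument.
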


Next we will prove Theorem A in an eccentric circle annular domain, an exterior petal-like domain with the exterior boundary $\gamma_E$ is an ellipse, an exterior petal-like domain with the exterior boundary $\gamma_E$ is an equilateral triangle, and a planar annular domain where the interior and exterior boundaries are equally scaled ellipses, respectively.

\begin{proof}[\bf Proof of Theorem A]

{\bf Case $A_1$:} If $\Omega$ is an eccentric circle annular domain. We need divide the proof into two situations.

 Situation 1: $\mathcal{C}(u)$ does not self-form closed curves that does not contain the interior boundary $\gamma_I$. This can be easily obtained by Lemma \ref{le2.1}.

 Situation 2: We claim that $\mathcal{C}(u)$ also does not form a closed curve containing the interior boundary $\gamma_I$. Without loss of generality,
we assume that $\Omega\subset \mathbb{R}^2$ is an eccentric circle annular domain, where the interior boundary $\gamma_I$ is a circle with its radius $r$ centered at $(a,0)(a>0)$, and the exterior boundary $\gamma_E$ also is a circle with its radius $R$ centered at (0,0) such that $a+r<R$, see Fig. 2.
\begin{center}
  \includegraphics[width=4.5cm,height=4.5cm]{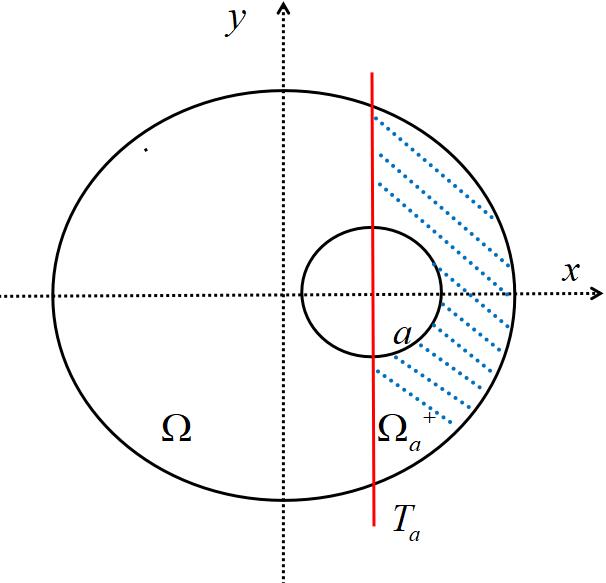}\\
  \scriptsize {\bf Fig. 2.}~~ Eccentric circle annular domain.
\end{center}

Next we denote by
\begin{equation*}\label{}\begin{array}{l}
\begin{array}{l}
T_a=\{p=(x,y)\in\Omega: x=a\},\\
\Omega_a^+=\{p=(x,y)\in\Omega: x>a\}.
\end{array}
\end{array}\end{equation*}
In $\Omega_a^+$, we set
$$v(x,y)=u(2a-x,y),$$
$$w(x,y)=u(x,y)-v(x,y)~\mbox{for~any}~x\in\Omega_a^+,$$
then we have
\begin{equation*}\label{}\begin{array}{l}
\left\{
\begin{array}{l}
\Delta w+f^\prime(\tau(x))w=0~\mbox{in}~\Omega_a^+,\\
w=0~\mbox{on}~\partial\Omega_a^+\cap T_a,\\
w<0~\mbox{on}~\partial\Omega_a^+\setminus T_a,
\end{array}
\right.
\end{array}\end{equation*}
where $\tau(x)$ is a bounded function.

By the maximum principle and Hopf lemma, we have
\begin{equation}\label{2.3}\begin{array}{l}
\left\{
\begin{array}{l}
w<0,~\mbox{in}~\Omega_a^+,\\
\frac{\partial w}{\partial\nu}|_{T_a}=-\frac{\partial w}{\partial x}|_{T_a}=-2\frac{\partial u}{\partial x}|_{T_a}>0.
\end{array}
\right.
\end{array}\end{equation}
This means that the solution $u$ can not exist critical points on $T_a,$ namely, $\mathcal{C}(u)$ can not form a Jordan curve containing the interior boundary $\gamma_I$. This completes the proof of Case $A_1$.

When $\Omega$ is an exterior petal-like domain, by the definition of exterior petal-like domain, without loss of generality, we only consider  the cases of two axes of symmetry and three axes of symmetry, that is, exterior petal-like domain with the exterior boundary $\gamma_E$ is an ellipse and exterior petal-like domain with the exterior boundary $\gamma_E$ is an equilateral triangle respectively. Other cases can be considered similarly.

{\bf Case $A_2$:} If $\Omega$ is an exterior petal-like domain with the exterior boundary $\gamma_E$ is an ellipse. In this case, we choose rotation $T=e^{i\frac{\pi}{2}}$, let $\Omega^{*}:=T(\Omega), W:=\Omega\cap\Omega^{*}$, see Fig 3.
\begin{center}
  \includegraphics[width=4.5cm,height=4.5cm]{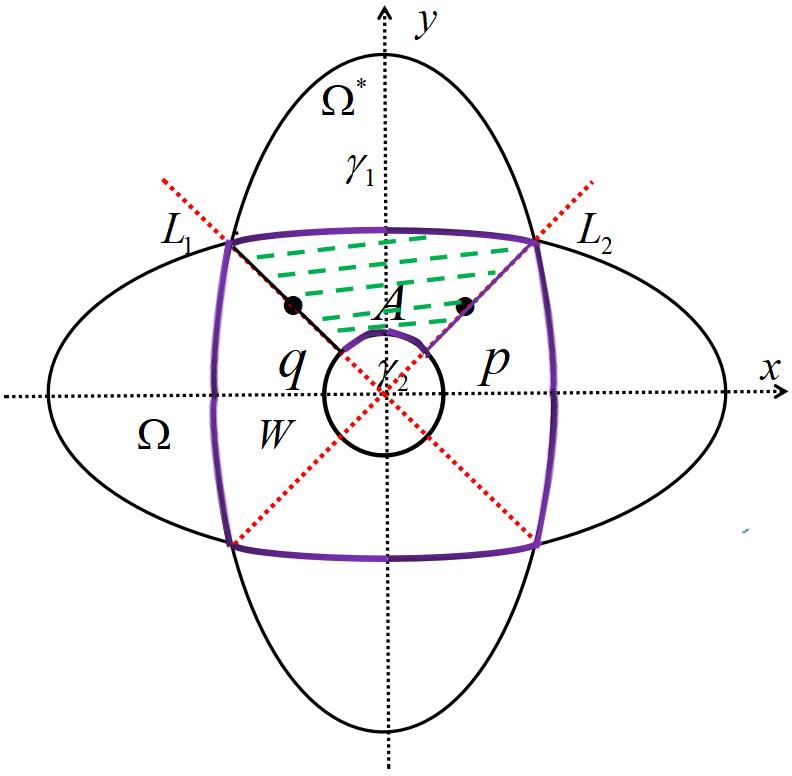}\\
  \scriptsize {\bf Fig. 3.}~~ The domain $W$ and the subdomain $A$ with the boundary $\partial A=\gamma_1\cup\gamma_2\cup L_1\cup L_2$.
\end{center}

We write $u^*(x)=u(T^{-1}(x))$ and have that $u^*(x)$ satisfies (\ref{1.1}) in $\Omega^{*}$. Note that $W$ is divided into four congruent subdomains as it is shown in Fig. 3. Without loss of generality, we consider subdomain $A$ such that $\partial A=\gamma_1\cup\gamma_2\cup L_1\cup L_2$, where $\gamma_1=\partial\Omega\cap\Omega^{*}$. $\gamma_2$ is the arc $re^{i\theta}$ where $r$ is the radius of the interior boundary $\gamma_I$, $\frac{\pi}{4}<\theta<\frac{3\pi}{4}$, $L_1$ and $L_2$ are the line segments of the rays $e^{i\frac{\pi}{4}}$ and $e^{i\frac{3\pi}{4}}$ respectively.

We remove the two end points of curve segment $\gamma_1$ such that $T^{-1}(\gamma_1)\subset\Omega.$ Since $u^{*}(\gamma_1)=u(T^{-1}(\gamma_1))$, we know that
$$u^{*}(x)>0~~\mbox{for any}~~x\in \gamma_1.$$

Furthermore, by the symmetry of domain $\Omega$ and equation (\ref{1.1}), we have that
 $$u^{*}(x)=u(x)~~\mbox{on}~~L_1\cup L_2,$$
 and
 $$u^{*}(x)=u(x)=0~~\mbox{on}~~\gamma_2.$$
 By the boundary condition, we have $u(x)-u^{*}(x)\leq 0$ on $\partial A.$ Then we have that
 \begin{equation}\label{2.4}\begin{array}{l}
\left\{
\begin{array}{l}
(\triangle+f'(\varphi(x)))(u-u^{*})=0~~\mbox{in}~~A,\\
u-u^{*}=0 ~~\mbox{on}~~\gamma_2\cup L_1\cup L_2,\\
u-u^{*}\leq 0 ~~\mbox{on}~~\gamma_1,
\end{array}
\right.
\end{array}\end{equation}
where $\varphi(x)$ is a smooth function. By the strong maximum principle, we obtain that
 $$u(x)-u^{*}(x)<0~~\mbox{in}~~A.$$

By Hopf Lemma, we have that
$$\frac{\partial (u-u^{*})(x)}{\partial\nu}>0~\mbox{on}~L_1\cup L_2.$$
It means that $\nabla(u-u^{*})(x)\neq 0$ on $L_1\cup L_2.$ Suppose by contradiction that the critical set $\mathcal{C}(u)$ contains a critical Jordan curve $\gamma$ in $\Omega$. By Lemma \ref{le2.1}, $\gamma$ must intersect the line segment $L_2$ at a point, which denoted by point $p$. It means that
\begin{equation}\label{2.5}\begin{array}{l}
\nabla u(p)=0.
\end{array}\end{equation}
 By $u^*(x)=u(T^{-1}(x))$  and $p\in L_2$, then $q:=T(p)\in L_1$ is the reflection of $p$ about the $y$ axis. In addition, according to $u(x)=u^*(T(x))$ for any $x\in\Omega,$ we have that $\nabla u^*(q)=0.$ Further using the symmetry of the domain, we know that
 \begin{equation}\label{2.6}\begin{array}{l}
\nabla u^*(p)=0.
\end{array}\end{equation}

By (\ref{2.5}) and (\ref{2.6}), we get a contradiction, therefore the critical point set $\mathcal{C}(u)$ does not contain any critical Jordan curves in $\Omega$. This completes the proof of Case $A_2$.

{\bf Case $A_3$:} If $\Omega$ is an exterior petal-like domain with the exterior boundary $\gamma_E$ is an equilateral triangle. In this case, we choose rotation $T=e^{i\frac{\pi}{3}}$, let $\Omega^{*}:=T(\Omega), U:=\Omega\cap\Omega^{*}$,
see Fig 4.
\begin{center}
  \includegraphics[width=5.0cm,height=5.0cm]{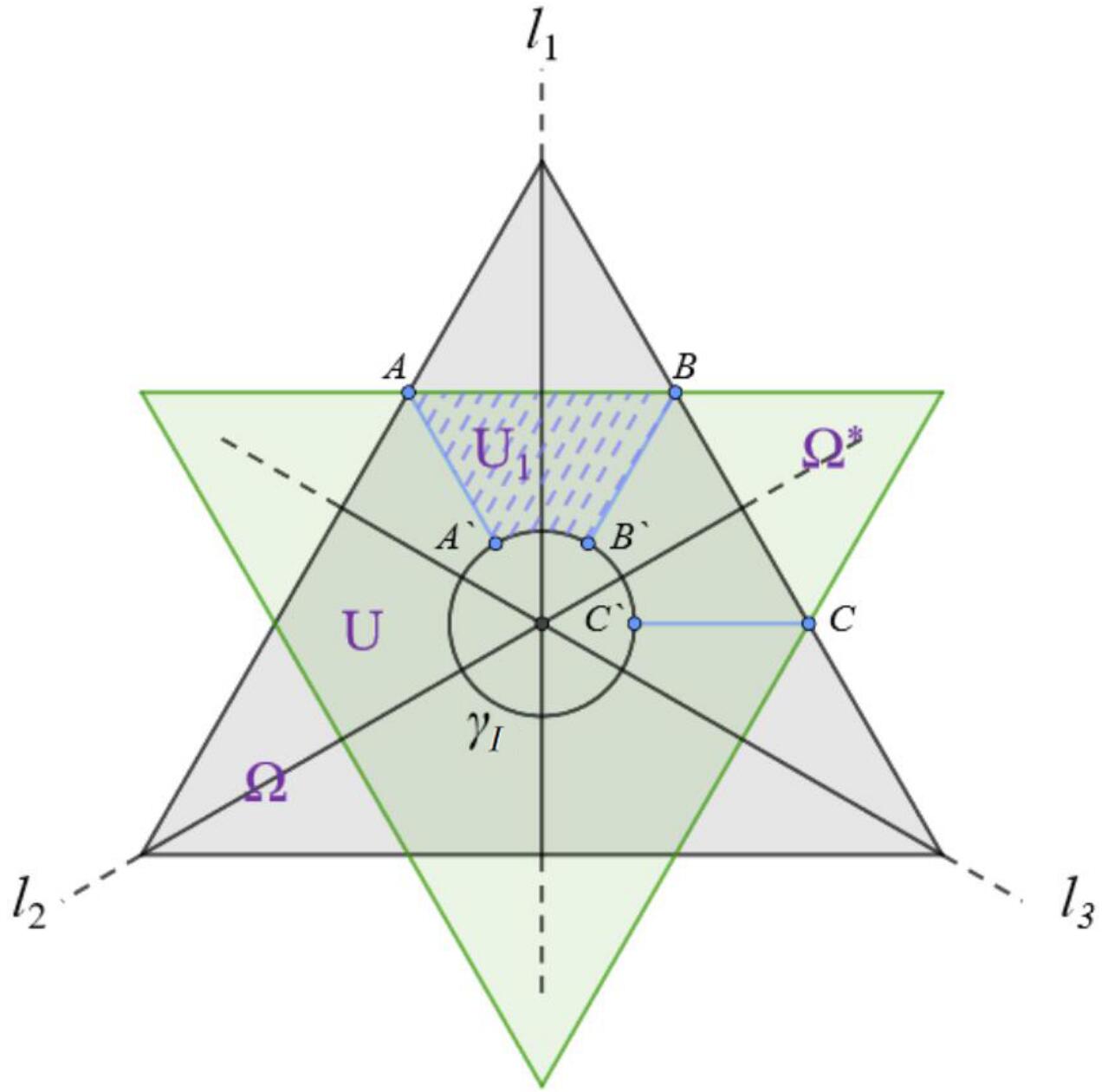}\\
  \scriptsize {\bf Fig. 4.}~~ The domain $U$ and the subdomain $U_1$ with the boundary $\partial U_1=\overline{AB}\cup\wideparen{A^\prime B^\prime}\cup \overline{AA^\prime}\cup \overline{BB^\prime}$.
\end{center}

We write $u^*(x)=u(T^{-1}(x))$ and have that $u^*(x)$ satisfies (\ref{1.1}) in $\Omega^{*}$. Note that $U$ is divided into six congruent subdomains as it is shown in Fig. 4. Without loss of generality, we consider subdomain $U_1$ such that $\partial U_1=\overline{AB}\cup\wideparen{A^\prime B^\prime}\cup \overline{AA^\prime}\cup \overline{BB^\prime}$, where $\overline{AB}=\partial\Omega^{*}\cap\Omega$, $\wideparen{A^\prime B^\prime }$ is the arc $re^{i\theta}$ and $r$ is the radius of the interior boundary $\gamma_I$, $\frac{\pi}{3}<\theta<\frac{2\pi}{3}$, $\overline{AA^\prime}$ and $\overline{BB^\prime}$ are the line segments of the rays $e^{i\frac{\pi}{3}}$ and $e^{i\frac{2\pi}{3}}$ respectively.

We remove the two end points of line segment $\overline{AB}$ such that $T^{-1}(\overline{AB})\subset\Omega^{*}\cap \partial\Omega.$ Since $u^{*}(\overline{AB})=u(T^{-1}(\overline{AB}))$, we know that
$$u^{*}(x)=0~~\mbox{for any}~~x\in \overline{AB}.$$

Furthermore, by the symmetry of domain $\Omega$ and equation (\ref{1.1}), we have that
 $$u^{*}(x)=u(x)~~\mbox{on}~~\overline{AA^\prime}\cup \overline{BB^\prime},$$
 and
 $$u^{*}(x)=u(x)=0~~\mbox{on}~~\wideparen{A^\prime B^\prime}.$$
 By the boundary condition, we have $u(x)-u^{*}(x)\geq 0$ on $\partial U_1.$ Then we have that
 \begin{equation}\label{2.7}\begin{array}{l}
\left\{
\begin{array}{l}
(\triangle+f'(\phi(x)))(u-u^{*})=0~~\mbox{in}~~U_1,\\
u-u^{*}=0 ~~\mbox{on}~~\wideparen{A^\prime B^\prime}\cup \overline{AA^\prime}\cup \overline{BB^\prime},\\
u-u^{*}\geq 0 ~~\mbox{on}~~\overline{AB},
\end{array}
\right.
\end{array}\end{equation}
where $\phi(x)$ is a smooth function. By the strong maximum principle, we obtain that
 $$u(x)-u^{*}(x)>0~~\mbox{in}~~U_1.$$

By Hopf Lemma, we have that
$$\frac{\partial (u-u^{*})(x)}{\partial\nu}<0~\mbox{on}~\overline{AA^\prime}\cup \overline{BB^\prime}.$$
It means that $\nabla(u-u^{*})(x)\neq 0$ on $\overline{AA^\prime}\cup \overline{BB^\prime}.$ According to Lemma \ref{le2.2}, we know that $\mathcal{C}(u)$ does not self-form closed curves that do not contain the interior boundary $\gamma_I$. Now suppose by contradiction that the critical set $\mathcal{C}(u)$ contains a Jordan curve $\gamma$ in $\Omega$. The rest proof of this case is similar to the proof of Case $A_2$, then the critical point set $\mathcal{C}(u)$ does not exist any Jordan curves in $\Omega$. This completes the proof of Case $A_3$.

{\bf Case $A_4$:} If $\Omega$ is a planar annular domain where the interior and exterior boundaries are equally scaled ellipses. For this case, the idea of the proof is essentially the same as Case $A_2$. Here we omit the proof. \end{proof}

\begin{Remark}\label{rem2.4}
The conclusion of Theorem $A$ is quite different from the conclusion on simply connected domains. In fact, by Theorem 1 in \cite{GidasNi} and Theorem 1.3 in \cite{Berestycki}, we know that the critical point set $\mathcal{C}(u)$ of a solution $u$ to (\ref{1.1}) is stable under small domain deformation in a bounded Steiner-symmetric domain $\Omega\subset\mathbb{R}^n(n\geq 2)$, namely, the number, geometric location and non-degeneracy of critical points of the solution $u$ do not change.
\end{Remark}

\section{Number of critical points}
~~~~Before we present the proof of the exact number of critical points, we need give some notations. For a given direction $\theta\in \mathbb{S}^1$, we define the directional derivative and its nodal set:
$$u_\theta(x)=\nabla u(x)\cdot \theta,x\in \overline{\Omega}~\mbox{and}~N_\theta=\{x\in\overline{\Omega}:u_\theta(x)=0\},$$
and the following critical point set of directional derivative $u_\theta(x)$:
$$M_\theta=\{x\in N_\theta: \nabla u_\theta(x)=D^2u(x)\cdot\theta=0\},$$
where $D^2u(x)$ denotes the Hessian matrix of $u$ at point $x.$ We know that $\mathcal{C}(u)=N_\alpha\cap N_\beta$ for any two noncollinear directions $\alpha,\beta\in\mathbb{S}^1.$

By the results of \cite{Arango,CabreChanillo}, we can give the following detail description of $N_\theta$.
\begin{Remark}\label{rem3.1}
Assume that $\Omega\subset \mathbb{R}^2$ is a smooth annular domain and the boundaries have nonzero curvature. Let $u$ be a solution to (\ref{1.1}) and $f(u)\geq 0$ be a decreasing and real analytic function. Then (1) $N_\theta$ does not self-form circles within $\Omega$ unless it contains the interior boundary $\gamma_I$; (2) if $N_\theta$ does not contain the interior boundary $\gamma_I$, then $N_\theta$ must intersect at two points with the interior and exterior boundaries respectively; (3) there are no critical points of $u_\theta(x)$ on $\partial\Omega,$ that is, $M_\theta\cap \partial\Omega=\varnothing.$
\end{Remark}

The following results due to Cheng \cite{Cheng}, provide precise description of $u^{-1}(0)$.

\begin{Lemma}\label{le3.2}
(see \cite[Theorem 2.5]{Cheng}) Let $\Omega\subset\mathbb{R}^2$ be a bounded domain. Then, for any solution of the equation $(\Delta+h(x))u=0, h\in C^\infty(\Omega)$, the following are true:\\
(1) The critical points of $u$ on the nodal lines are isolated and at regular points the nodal set is a smooth curve;\\
(2) when the nodal lines meet, their intersections are the critical points of $u$, they form an equiangular system of at least four rays splitting $\Omega$ into a finite number of connected subdomains.
\end{Lemma}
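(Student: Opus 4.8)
The plan is to reduce both assertions to a purely local analysis of $u$ in a small disc around an arbitrary zero $x_0\in u^{-1}(0)$, by comparing $u$ with the leading term of its asymptotic expansion there. First I would invoke the strong unique continuation property for the elliptic operator $\Delta+h$ with $h\in C^\infty(\Omega)$ (Aronszajn/Cordes): a nontrivial solution cannot vanish to infinite order, so at each $x_0$ there is a finite integer $n=n(x_0)\geq 1$, namely the exact order of vanishing. The central structural input is the Hartman--Wintner/Bers asymptotic lemma: if $u$ vanishes to order $n$ at $x_0$, then
\[
u(x_0+y)=P_n(y)+o(|y|^n)\qquad\text{as }y\to 0,
\]
where $P_n\not\equiv 0$ is a \emph{homogeneous harmonic} polynomial of degree $n$. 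Heuristically this holds because $\Delta u=-h\,u=O(|y|^n)$ near $x_0$, so the degree-$n$ part of $u$ must be annihilated by $\Delta$; I would make it rigorous either through the Bers--Vekua representation of solutions as generalized analytic functions, or by expanding $u$ in polar coordinates into Fourier modes and isolating the dominant mode in the resulting coupled system of ordinary differential equations.

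For part (1) I would split according to $n(x_0)$. If $n(x_0)=1$, then $P_1(y)=\nabla u(x_0)\cdot y$ with $\nabla u(x_0)\neq 0$; since $h\in C^\infty$ forces $u\in C^\infty$ by elliptic regularity, the implicit function theorem shows that $u^{-1}(0)$ is a smooth curve near $x_0$. These are the regular points. A nodal point is \emph{critical} precisely when $\nabla u(x_0)=0$, which forces $n(x_0)\geq 2$, the case treated next.

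For part (2) I would first record the elementary fact that every nonzero real homogeneous harmonic polynomial of degree $n$ in the plane has, in polar coordinates centred at $x_0$, the form $P_n=c\,r^n\cos\!\big(n(\phi-\phi_0)\big)$; hence its zero set is exactly $2n$ rays through the origin at angles $\phi_0+(2k+1)\pi/(2n)$, equally spaced by $\pi/n$. The task is then to transfer this picture to the genuine nodal set of $u$. I would rescale, setting $u_\lambda(y)=\lambda^{-n}u(x_0+\lambda y)$, so that $u_\lambda\to P_n$ together with derivatives on compact subsets as $\lambda\to 0$. A degree/transversality argument then shows that near $x_0$ the nodal set consists of exactly $2n$ smooth arcs emanating from $x_0$, each tangent to one of the rays of $P_n$; because these tangent directions are equiangular and $n\geq 2$, the nodal lines meet at $x_0$ in an equiangular system of at least four rays, cutting a punctured neighbourhood into $2n$ sectors. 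Finally, on each such arc away from $x_0$ the vanishing order is $1$, so $\nabla u\neq 0$ there, whence $x_0$ is the only critical zero in the disc; this establishes the isolatedness asserted in part (1).

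The main obstacle I expect is the perturbation step in part (2): controlling the remainder $o(|y|^n)$ well enough to guarantee that the lower-order terms only \emph{bend} the $2n$ nodal rays of $P_n$, without creating, destroying, or merging nodal branches. Making the convergence $u_\lambda\to P_n$ quantitative in a $C^1$ sense near the rays (where $\nabla P_n\neq 0$), and combining it with a winding-number count of the zeros of $u$ on small circles $|y|=\rho$, is the delicate part; this is precisely the content of Cheng's argument in \cite{Cheng}, on which I would ultimately rely.
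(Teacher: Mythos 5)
The paper offers no proof of this lemma at all --- it is quoted directly from Cheng's Theorem 2.5 --- and your outline correctly reconstructs the standard argument behind that theorem: finite vanishing order via strong unique continuation, the Hartman--Wintner leading homogeneous harmonic polynomial $c\,r^n\cos\big(n(\phi-\phi_0)\big)$ whose zero set is $2n$ equiangular rays, and a blow-up/perturbation step transferring that picture to the actual nodal set. Since you explicitly defer the one genuinely delicate step (the quantitative $C^1$ control of the remainder, ensuring the $2n$ nodal branches are only bent and not created or merged) to Cheng --- which is precisely the content the paper itself imports by citation --- your proposal is consistent with, and strictly more informative than, what the paper provides.
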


Now we will give the lower bound of number of critical points of the solution $u$ to (\ref{1.1}) in an exterior petal-like domain.

\begin{Theorem}\label{thm3.3}
 Let $u$ be a non-constant solution to (\ref{1.1}). Assume that $\Omega\subset\mathbb{R}^2$ is an exterior petal-like domain. If $f(u)\geq 0$ is a decreasing and real analytic function, then on each symmetric axis $u$ has at least two critical points.
\end{Theorem}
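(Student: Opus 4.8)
The plan is to reduce the statement to a one-dimensional argument on each axis, after first upgrading ``a solution'' to ``a symmetric solution.'' I would begin with two preliminary observations. Since $f(u)\ge 0$, equation (\ref{1.1}) gives $\Delta u=-f(u)\le 0$, so $u$ is superharmonic; together with $u=0$ on $\partial\Omega$ and $u\not\equiv 0$, the strong minimum principle yields $u>0$ in the interior of $\Omega$. Secondly, because $f$ is decreasing we have $f'(u)\le 0$, and this forces uniqueness: if $u_1,u_2$ are two solutions, then $w=u_1-u_2$ satisfies $\Delta w+c(x)w=0$ with $c(x)=f'(\zeta(x))\le 0$ by the differential mean value theorem and $w=0$ on $\partial\Omega$, whence the weak maximum principle for operators with nonpositive zeroth-order coefficient gives $w\equiv 0$. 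Consequently, for each symmetric axis $l_i$ the reflection $\sigma_i$ across $l_i$ leaves $\Omega$ and (\ref{1.1}) invariant, so $u\circ\sigma_i$ is again a solution and, by uniqueness, $u\circ\sigma_i=u$; that is, $u$ is symmetric about every $l_i$.

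Fix one symmetric axis $l$ and let $\theta$ be a unit vector perpendicular to $l$. For $x\in l$ one has $\sigma(x)=x$ and $\sigma(x+t\theta)=x-t\theta$, so the symmetry of $u$ gives $u(x+t\theta)=u(x-t\theta)$; the map $t\mapsto u(x+t\theta)$ is therefore even and its derivative vanishes at $t=0$, i.e. the transversal derivative $u_\theta$ vanishes identically on $l\cap\Omega$. Hence a point of $l\cap\Omega$ is a critical point of $u$ exactly when the tangential derivative of $u$ along $l$ vanishes there. Since $\gamma_I$ is a circle centered at the center of $\Omega_{\gamma_E}$ and $\Omega_{\gamma_E}$ is $l$-convex, the line $l$ meets each of $\gamma_I$ and $\gamma_E$ in exactly two antipodal points, so $l\cap\Omega$ is the disjoint union of two radial segments $S_{+}$ and $S_{-}$, each running from $\gamma_I$ to $\gamma_E$. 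On each such segment $u$ equals $0$ at both endpoints and is strictly positive in between, so the restriction of $u$ to the segment attains an interior maximum, where the tangential derivative is zero; combined with $u_\theta\equiv 0$ this gives $\nabla u=0$ at that point. Thus $S_{+}$ and $S_{-}$ each carry a critical point, producing at least two critical points on $l$.

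The step demanding the most care is the passage from ``a solution'' to ``a symmetric solution,'' since symmetry of $u$ is not postulated a priori; the decreasing hypothesis on $f$ is precisely what supplies it, through the uniqueness argument above. A secondary point to verify is that $l$ separates $\Omega$ into exactly two radial segments, which rests on $\gamma_I$ being a centered circle and on the $l$-convexity of $\Omega_{\gamma_E}$; without the latter $l$ could in principle meet $\gamma_E$ more than twice. Finally, since both maxima are interior points of $\Omega$, where $f$ analytic makes $u$ analytic and $\nabla u$ well defined, possible cusps of $\gamma_E$ on the boundary play no role, in agreement with Lemma \ref{le2.2}.
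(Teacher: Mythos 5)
Your proposal is correct and follows the same overall strategy as the paper: uniqueness of the solution (from $f$ decreasing) forces $u$ to inherit the reflection symmetry about each axis $l$, hence the transversal derivative vanishes identically on $l\cap\Omega$, and it then remains to find a zero of the tangential derivative on each of the two half-axis segments joining $\gamma_I$ to $\gamma_E$. The only point where you diverge is that last step: the paper applies the Hopf lemma at the two boundary endpoints of each segment to get opposite signs of the tangential derivative and concludes by the intermediate value theorem, whereas you use $u>0$ in $\Omega$ (from superharmonicity) together with $u=0$ at both endpoints to locate an interior maximum of the one-dimensional restriction, a Rolle-type argument. Your variant is, if anything, slightly more robust, since it needs no boundary regularity at the points where $l$ meets $\gamma_E$ (which for a petal-like domain could be corners or cusps, where the classical Hopf lemma is delicate); both versions rely equally on the $l$-convexity of $\Omega_{\gamma_E}$ and the centering of $\gamma_I$ to ensure $l\cap\Omega$ really is just two such segments.
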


\begin{proof} From the assumption that $f$ is a decreasing function and the strong maximum principle, it can easily know that the solution to (\ref{1.1}) is unique. According to the symmetry of exterior petal-like domain $\Omega$, without loss of generality, let us suppose that $\Omega$ is symmetric with respect to $y$-axis. In addition, by the monotonicity of function $f$ and problem (\ref{1.1}), we know that
$$u(-x,y)=u(x,y).$$
In fact, by the symmetric assumption of domain $\Omega$, we know that $\Omega$ is symmetric with respect to the hyperplane
$$T_1=\{(x,y)\in\mathbb{R}^2,x=0\},$$
and we define that the domains $\Omega^-=\{x\in\Omega, x<0\}$ and $\Omega^+=\{x\in\Omega, x>0\}$ are the left and right of $T_1$ respectively. Next we assume that $v^-$ and $v^+$ are the reflected functions of $u$ in the domains of $\Omega^-$ and $\Omega^+$, respectively,
\begin{equation*}\label{}\begin{array}{l}
v^-(x,y):=u(-x,y) ~~\mbox{for}~(x,y)\in\Omega^-,\\
v^+(x,y):=u(-x,y) ~~\mbox{for}~(x,y)\in\Omega^+.
\end{array}\end{equation*}
By (\ref{1.1}), the symmetry of domain $\Omega$ and the differential mean value theorem, considering the functions $w^-=u-v^-$ and $w^+=u-v^+$, we have
\begin{equation}\label{3.1}\begin{array}{l}
\left\{
\begin{array}{l}
\triangle w^- +f'(\varphi^-(x))w^-=0~~\mbox{in}~\Omega^-,\\
w^-=0~~\mbox{on}~\partial\Omega^-,
\end{array}
\right.
\end{array}\end{equation}
and
\begin{equation}\label{3.2}\begin{array}{l}
\left\{
\begin{array}{l}
\triangle w^+ +f'(\varphi^+(x))w^+=0~~\mbox{in}~\Omega^+,\\
w^+=0~~\mbox{on}~\partial\Omega^+,
\end{array}
\right.
\end{array}\end{equation}
where $\varphi^-(x)$ and $\varphi^+(x)$ are two smooth functions. Using (\ref{3.1}) and (\ref{3.2}), we obtain that
\begin{equation*}\label{}\begin{array}{l}
w^-(x)=0 ~~\mbox{in}~\Omega^-,\\
w^+(x)=0 ~~\mbox{in}~\Omega^+.
\end{array}\end{equation*}
According to the definitions of $\Omega^-$ and $\Omega^+$, then we have
$$u(-x,y)=u(x,y).$$

Therefore we have that
\begin{equation}\label{3.3}\begin{array}{l}
\frac{\partial u}{\partial x}=0~\mbox{on}~ x=0.
\end{array}\end{equation}
Suppose that the intersection points of domain $\Omega$ with the $y$ positive semi-axis are $p$ and $q$, in turn. In addition, by the definition of exterior petal-like domain and Hopf lemma, we know that
$$\frac{\partial u}{\partial y}(p)>0~~\mbox{and}~\frac{\partial u}{\partial y}(q)<0.$$

By the continuity of function $\partial_{y} u$, then there is at least one point $\widetilde{p}$ between $p$ and $q$ on the $y$ positive semi-axis such that
$$\frac{\partial u}{\partial y}(\widetilde{p})=0.$$
Combining the above with (\ref{3.3}), we get that $u$ has at least one interior critical point on the $y$ positive semi-axis. Similarly, there is at least two critical points on the each other axis of symmetry. Thus we obtain that $u$ has at least $2k$ critical points along all symmetric axes of the exterior petal-like domain $\Omega$. \end{proof}

Now we will prove the exact number of critical points of the solution $u$ on each symmetric axis of domain $\Omega$, where $\Omega$ is an eccentric circle annular domain, a smooth exterior petal-like domain with even number axes of symmetry satisfied that the symmetric axes are perpendicular in pairs and the exterior boundary has nonzero curvature, and an annular domain where the interior and exterior boundaries are ellipses, respectively.

\begin{proof}[\bf Proof of Theorem B]  {\bf Case $B_1$:} If $\Omega$ is an eccentric circle annular domain. We set up the usual contradiction argument. Without loss of generality, we assume that $\Omega\subset \mathbb{R}^2$ is an eccentric circle annular domain, where the interior boundary $\gamma_I$ is a circle with its radius $r$ centered $(a,0)(a>0)$, and the exterior boundary $\gamma_E$ also is a circle with its radius $R$ centered (0,0) such that $a+r<R$, see Fig. 6. Let
$$v_1=\frac{\partial u}{\partial x}~\mbox{and}~ v_2=\frac{\partial u}{\partial y}.$$
So $v_1$ and $v_2$ satisfy the following equation respectively
$$\triangle v+f^\prime(u)v=0 ~\mbox{in}~\Omega.$$

By the symmetry of solution $u$, we know that $u(x,-y)=u(x,y)$, $\frac{\partial u}{\partial y}=0$ on $x$-axis, and the nodal set $v_1^{-1}(0)$ is symmetric with respect to $x$-axis. Similar to Theorem B, we know that the solution $u$ has at least one critical point on the coordinate intervals $(-R,a-r)$ and $(a+r,R)$ of $x$-axis, respectively. Next, we claim that $u$ has only one critical point $p_1$ and $p_2$ on the coordinate intervals $(-R,a-r)$ and $(a+r,R)$, respectively. In fact, if $u$ has more than one critical point on the coordinate intervals $(-R,a-r)$ and $(a+r,R)$, respectively. Without loss of generality, firstly we assume that $u$ has two critical points on the coordinate intervals $(a+r,R)$, that is $p_2, p^*_2$. The Case $A_1$ of Theorem A shows that the nodal set $v_1^{-1}(0)$ does not intersect with the line $T_a$ in $\Omega$. By Remark \ref{rem3.1} and Lemma \ref{le3.2}, then the possible distribution of the nodal set $v^{-1}_1(0)$ in the right of the line $T_a$ can see Fig. 5.

\begin{center}
  \includegraphics[width=5cm,height=5cm]{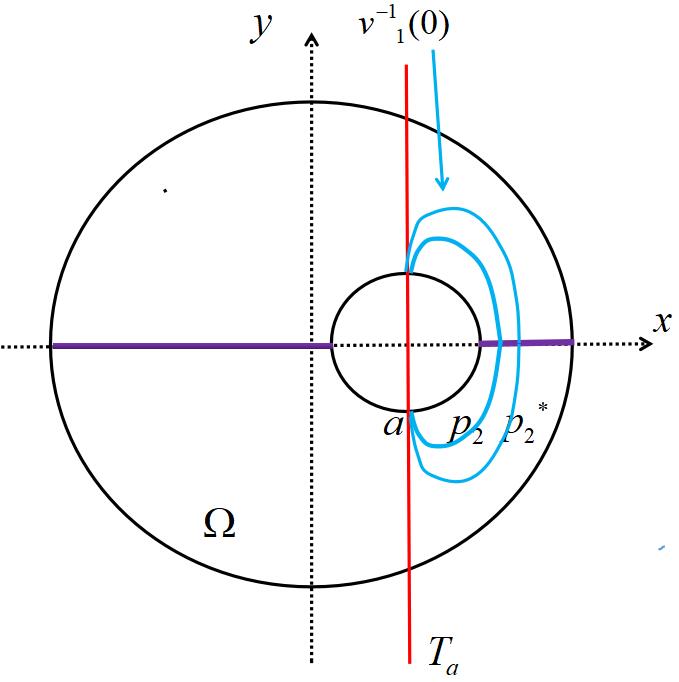}\\
  \scriptsize {\bf Fig. 5.}~~ The possible distribution of the nodal sets $v^{-1}_1(0)$ passing through the critical points $p_2, p^*_2$.
\end{center}
The nodal set $v^{-1}_1(0)$ known by Fig. 6 must form a connected domain in the right of the line $T_a$, by the analyticity of $f(u)$, we have $v_1\equiv 0$ in $\Omega$. However, the Case $A_1$ of Theorem A shows that the nodal set $v_1^{-1}(0)$ does not intersect with the line $T_a$ in $\Omega$, so this is a contradiction. Then $u$ has only one critical point on the coordinate interval $(a+r,R)$.

Next, by the section 3 in \cite{AlessandriniMagnanini1}, for the solution $u$ to (\ref{1.1}) in an eccentric circle annular domain $\Omega$, we know that
\begin{equation}\label{3.4}\begin{array}{l}
\sharp\{\mbox{maximum points of} ~u ~\mbox{in}~ \Omega\}=\sharp\{\mbox{saddle points of} ~u~ \mbox{in} ~\Omega\}.
\end{array}\end{equation}
We know that if $u$ has more than one critical point on the coordinate interval $(-R,a-r)$, then $u$ has at least three critical points on the coordinate interval $(-R,a-r)$. Similar to the proof on the coordinate interval $(a+r,R)$ of $x$-axis, we consider the possible distribution of the nodal set $v^{-1}_1(0)$ in the left of the line $T_a$, then the nodal set $v^{-1}_1(0)$ also must form a connected domain in the left of the line $T_a$. This is also a contradiction. Therefore $u$ also has only one critical point on the coordinate interval $(-R,a-r)$. So the possible distribution of  the nodal set $v^{-1}_1(0)$ in domain $\Omega$ can see Fig. 6.
\begin{center}
  \includegraphics[width=5cm,height=5cm]{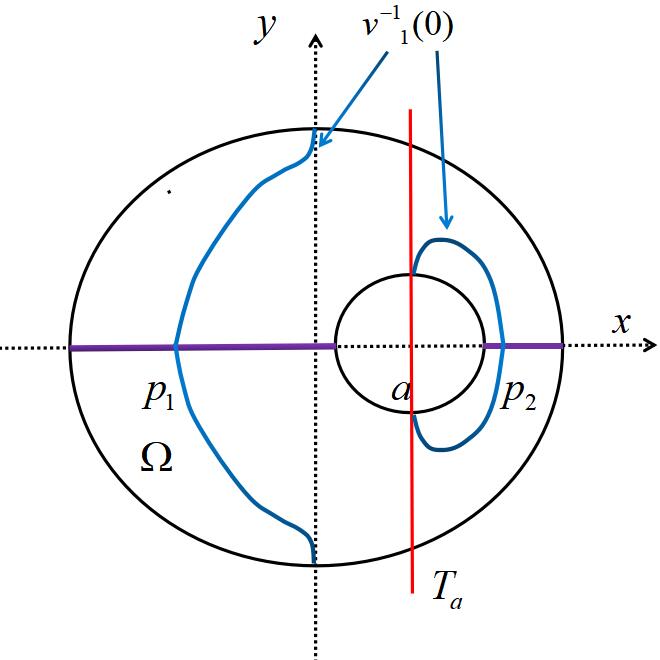}\\
  \scriptsize {\bf Fig. 6.}~~ The possible distribution of the nodal sets $v^{-1}_1(0)$ passing through the critical points $p_1,p_2$.
\end{center}
This completes the proof of Case $B_1$.

 {\bf Case $B_2$:} If $\Omega$ is a smooth exterior petal-like domain with even number axes of symmetry satisfied that the symmetric axes are perpendicular in pairs and the exterior boundary has nonzero curvature. Without loss of generality, we assume that $\Omega\subset \mathbb{R}^2$ is an exterior petal-like domain with the exterior boundary $\gamma_E$ is an ellipse. We set up the usual contradiction argument. According to the result of Theorem B, we know that $u$ has at least one critical point on each symmetric semi-axis. We set
$$v_1=\frac{\partial u}{\partial x}~\mbox{and}~ v_2=\frac{\partial u}{\partial y}.$$
Then $v_1$ and $v_2$ satisfy the following equations respectively
$$\triangle v_1+f^\prime(u)v_1=0 ~\mbox{in}~\Omega~\mbox{and}~ \triangle v_2+f^\prime(u)v_2=0 ~\mbox{in}~\Omega.$$

By the symmetry of domain $\Omega$, we know that $\frac{\partial u}{\partial x}=0$ on $x=0$, $\frac{\partial u}{\partial y}=0$ on $y=0$, and the nodal sets $v^{-1}_1(0)$ and $v^{-1}_2(0)$ are symmetric with respect to $x-$axis and $y-$axis respectively. Firstly, if $u$ has only one critical point on each symmetric semi-axis, that is $p_1,p_2,q_1,q_2$. By Remark \ref{rem3.1} and Lemma \ref{le3.2}, then the possible distribution of the nodal sets $v^{-1}_1(0)$ and $v^{-1}_2(0)$ can see Fig. 7.
\begin{center}
  \includegraphics[width=13cm,height=4.0cm]{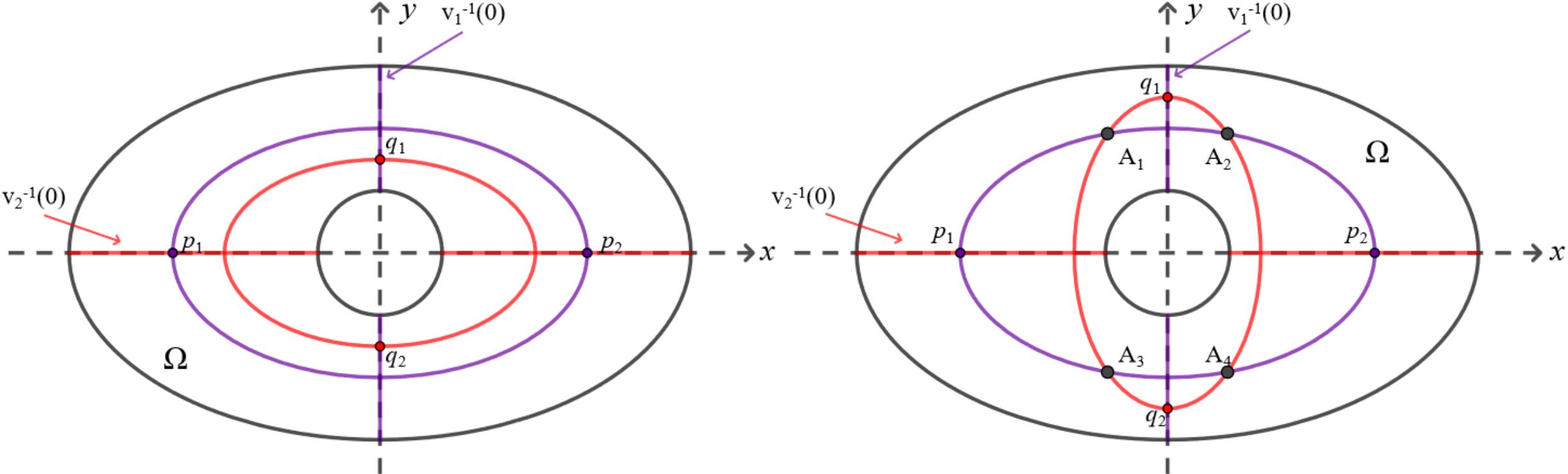}\\
  \scriptsize {\bf Fig. 7.}~~ The possible distribution of the nodal sets $v^{-1}_1(0)$ and $v^{-1}_2(0)$ passing through the critical points $p_1,p_2,q_1,q_2$.
\end{center}
Without loss of generality, we assume that there exist another critical points $p_4$ and $p_3$ on the $x$ positive and negative semi-axis respectively. Moreover $p_4$ and $p_3$ are symmetric with respect to $y$-axis. We know that $\mathcal{C}(u)=N_\alpha\cap N_\beta$ for any two noncollinear directions $\alpha,\beta\in\mathbb{S}^1.$ Now we consider the following possible distribution of the nodal set $v^{-1}_1(0)$ passing through the critical points $p_4$ and $p_3$, see Fig. 8.
\begin{center}
  \includegraphics[width=8.5cm,height=4.5cm]{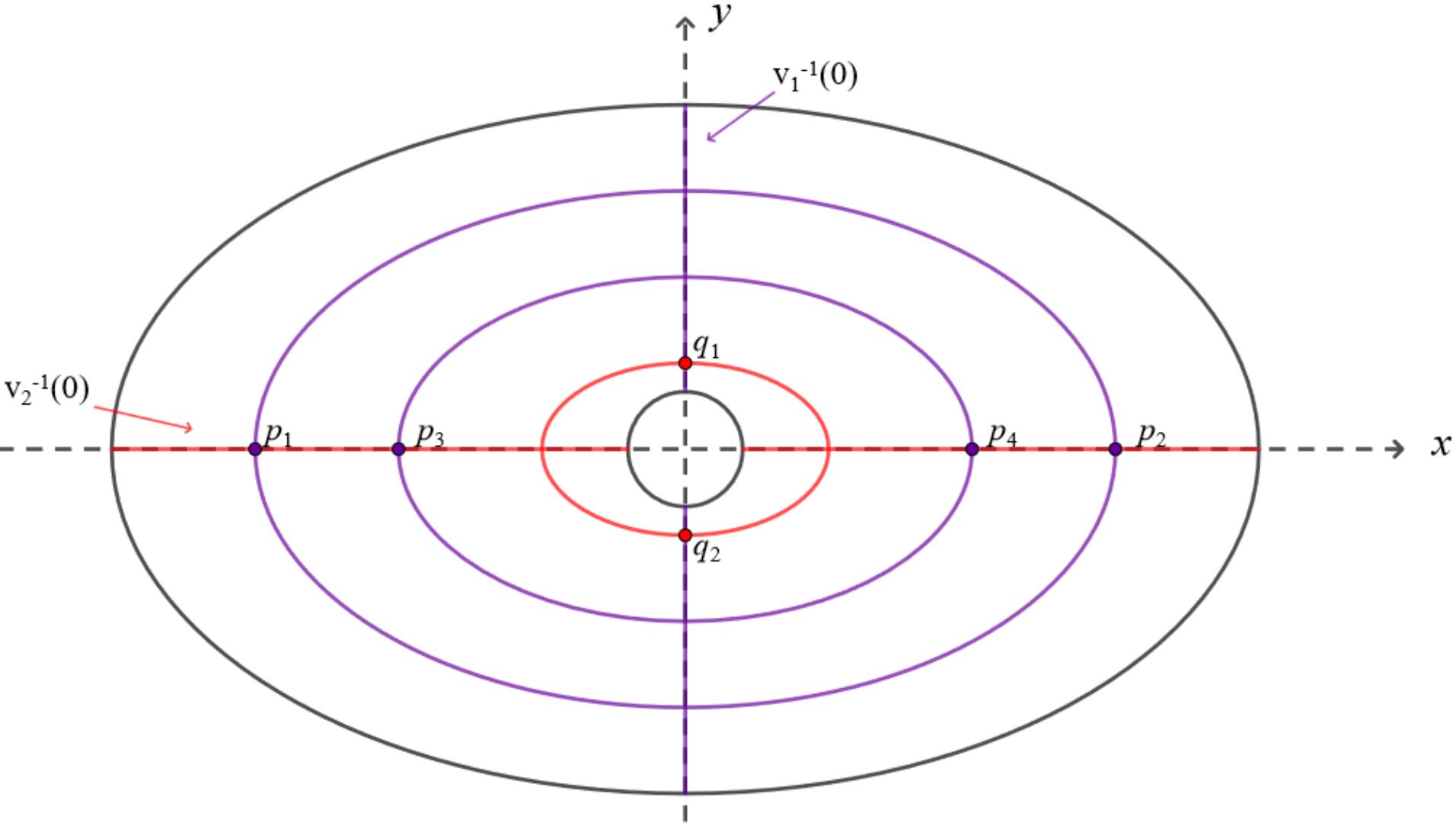}\\
  \scriptsize {\bf Fig. 8.}~~ The possible distribution of the nodal sets $v^{-1}_1(0)$ passing through the critical points $p_1,p_2,p_3,p_4$.
\end{center}
The nodal set  $v^{-1}_1(0)$ known by the Fig. 8 must form a connected domain. By the analyticity of function $f(u)$ , then $v_1=0$ in $\Omega.$ Similarly, by the result of $\sharp\{\mbox{maximum points of} ~u ~\mbox{in}~ \Omega\}=\sharp\{\mbox{saddle points of} ~u~ \mbox{in} ~\Omega\},$ we can obtain $v_2=0$ in $\Omega.$  So $u$ is a constant in $\Omega,$ this is a contradiction. Therefore the solution $u$ on the symmetric axes has exactly four critical points, i.e., one for each symmetric semi-axis. This completes the proof of Case $B_2$.

{\bf Case $B_3$:} If $\Omega$ is an annular domain where the interior and exterior boundaries are both ellipses. For this case, the idea of the proof is essentially the same as Case $B_2$. Here we omit the proof.  \end{proof}

By Theorem B, we have know that the solution $u$ on each symmetric axis has exactly two critical points in an eccentric circle annular domain or an exterior petal-like domain with the exterior boundary $\gamma_E$ is an ellipse. In order to prove Theorem C, we will further prove that solution $u$ has no extra critical points in any other sub-domain except on the symmetry axes, and give the specific distribution of the maximum points and saddle points.

\begin{proof}[\bf Proof of Theorem C] {\bf Case $C_1$:} If $\Omega$ is an eccentric circle annular domain. Firstly, we will  prove that solution $u$ has no extra critical points in any other sub-domain except on the symmetry axes. Without loss of generality, we assume that $\Omega\subset \mathbb{R}^2$ is an eccentric circle annular domain, where the interior boundary $\gamma_I$ is a circle with its radius $r$ centered $(a,0)(a>0)$, and the exterior boundary $\gamma_E$ also is a circle with its radius $R$ centered (0,0) such that $a+r<R$, see Fig. 9.
\begin{center}
  \includegraphics[width=5.5cm,height=5.5cm]{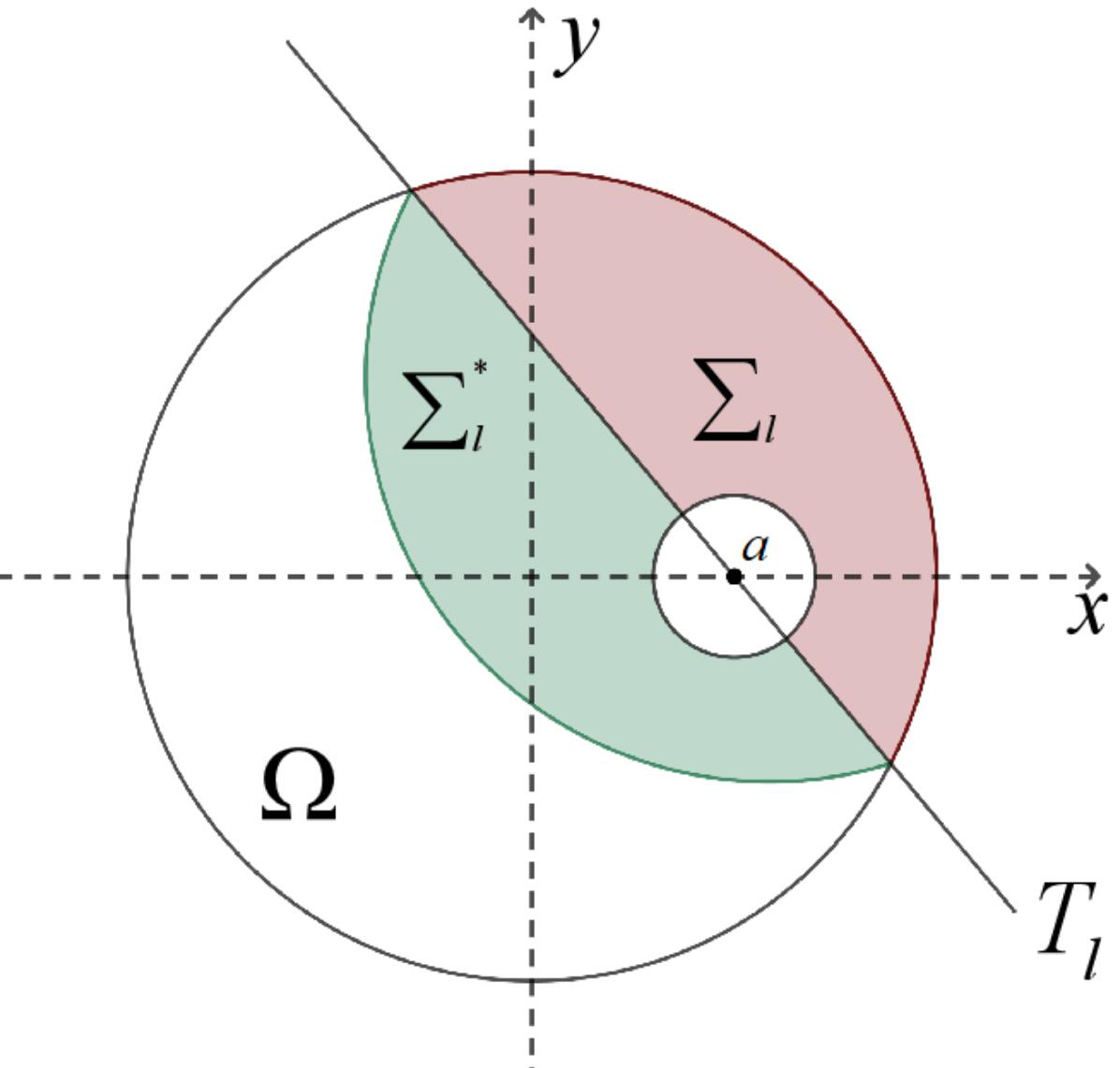}\\
  \scriptsize {\bf Fig. 9.}~~ The domains $\sum_l$ and $\sum^\ast_l$ .
\end{center}
Suppose that $l:y=k(x-a)~\mbox{for any}~k\neq 0,$ and whose right normal direction is $\nu_l^\perp$. (\ref{2.3}) means that solution $u$ has no critical points on $\{x=a\}\cap\Omega.$  Now we define
\begin{equation*}\label{}\begin{array}{l}
\left\{
\begin{array}{l}
\sum_l=\{p=(x,y)\in\Omega: p~\mbox{is in the~right~side~of~line~}l~\},\\
T_l=\{p=(x,y)\in\Omega: y=k(x-a)~\mbox{for any}~k\neq 0\},\\
\sum^\ast_l=\mbox{the reflection domain of}~\sum_l~\mbox{with respect to}~T_l,\\
p^\ast_l=(x^\ast,y^\ast)~\mbox{is the reflection point of }p\in\sum_l~\mbox{with respect to}~T_l.
\end{array}
\right.
\end{array}\end{equation*}

In $\sum_l$, we denote by
\begin{equation*}\label{}\begin{array}{l}
w_l(p)=u(p^\ast_l)-u(p)~\mbox{in}~\sum_l.
\end{array}\end{equation*}

By the differential mean value theorem, we have
\begin{equation*}\label{}\begin{array}{l}
\left\{
\begin{array}{l}
\triangle w_l+f^\prime(\kappa(p))w_l=0~\mbox{in}~\sum_l,\\
w_l>0~\mbox{on}~\partial\sum_l \cap \gamma_E,\\
w_l=0~\mbox{on}~\partial\sum_l\cap T_l,\\
w_l=0~\mbox{on}~\partial\sum_l\cap \gamma_I,
\end{array}
\right.
\end{array}\end{equation*}
where $\kappa(p)$ is a bounded function in $\sum_l$.

By the maximum principle and Hopf lemma, we have
\begin{equation}\label{3.5}\begin{array}{l}
\left\{
\begin{array}{l}
w_l>0,~\mbox{in}~\sum_l,\\
\frac{\partial w_l}{\partial\nu_l^\perp}|_{T_l}=-2\frac{\partial u}{\partial \nu_l^\perp}|_{T_l}<0.
\end{array}
\right.
\end{array}\end{equation}
This means that the solution $u$ can not have critical points on any $T_l,$ namely, the solution $u$ has no extra critical points in any sub-domains except on the symmetry axis. Furthermore, by Theorem B, we know that $u$ only on the symmetric axis has exactly two critical points, one of them is the maximum point and the other is the saddle point.

In fact, the result of $\sharp\{\mbox{maximum~points~of~}u\}-\sharp\{\mbox{saddle~points~of~}u\}=0$ in \cite{AlessandriniMagnanini1} implies that the number of maximum point and saddle point is equal. Next we will give the specific distribution of the maximum point and saddle point of solution $u$ in an eccentric circle annular domain. Use the previous notation, see Fig. 2, we define
\begin{equation*}\label{}\begin{array}{l}
\left\{
\begin{array}{l}
\Omega_a^+=\{p=(x,y)\in\Omega: x>a\},\\
\widetilde{\Omega}_a^+=\mbox{the reflection domain of}~\Omega_a^+~\mbox{with respect to the line}~T_a,\\
\widetilde{u}(x,y)=u(2a-x,y),~\mbox{where }(x,y)\in\Omega_a^+.
\end{array}
\right.
\end{array}\end{equation*}

Obviously, $\widetilde{u}$ satisfies the following boundary value problem in $\Omega_a^+$

\begin{equation}\label{3.6}\begin{array}{l}
\left\{
\begin{array}{l}
\Delta\widetilde{u}+f(\widetilde{u})=0,~\mbox{in}~\Omega_a^+,\\
\widetilde{u}|_{\partial\Omega_a^+\cap T_a}=u|_{\partial\Omega_a^+\cap T_a},\\
\widetilde{u}|_{\partial\Omega_a^+\cap \gamma_I}=0,\\
\widetilde{u}|_{\partial\Omega_a^+\cap \gamma_E}>0.
\end{array}
\right.
\end{array}\end{equation}

In addition, $u$ satisfies the following boundary value problem in $\Omega_a^+$
\begin{equation}\label{3.7}\begin{array}{l}
\left\{
\begin{array}{l}
\Delta u+f(u)=0,~\mbox{in}~\Omega_a^+,\\
u|_{\partial\Omega_a^+\cap T_a}=u|_{\partial\Omega_a^+\cap T_a},\\
u|_{\partial\Omega_a^+\cap \gamma_I}=0,\\
u|_{\partial\Omega_a^+\cap \gamma_E}=0.
\end{array}
\right.
\end{array}\end{equation}

For (\ref{3.6}) and (\ref{3.7}), using the differential mean value theorem and the maximum principle, we can get
$$\widetilde{u}-u>0~\mbox{in}~\Omega_a^+.$$

Therefore, we have
$$\sup_{\{x<a\}} u\geq\sup_{x\in\widetilde{\Omega}_a^+} u=\sup_{x\in\Omega_a^+} \widetilde{u}>\sup_{x\in\Omega_a^+} u.$$
This means that the maximum point and saddle point of the solution $u$ are distributed on the long symmetric semi-axis and short symmetric semi-axis,  respectively.

{\bf Case $C_2$:} If $\Omega$ is an exterior petal-like domain with the exterior boundary $\gamma_E$ is an ellipse. Similarly, we will prove that solution $u$ has no extra critical points in any other sub-domain except on the symmetry axes.  Without loss of generality, we consider the sub-domain in the first quadrant. Suppose that ray $l:y=kx, k=\tan\theta, \theta\in(0,\frac{\pi}{2}), x>0,$ and whose perpendicular through the origin is $T_l^\perp$, see Fig. 10.
\begin{center}
  \includegraphics[width=10cm,height=5.5cm]{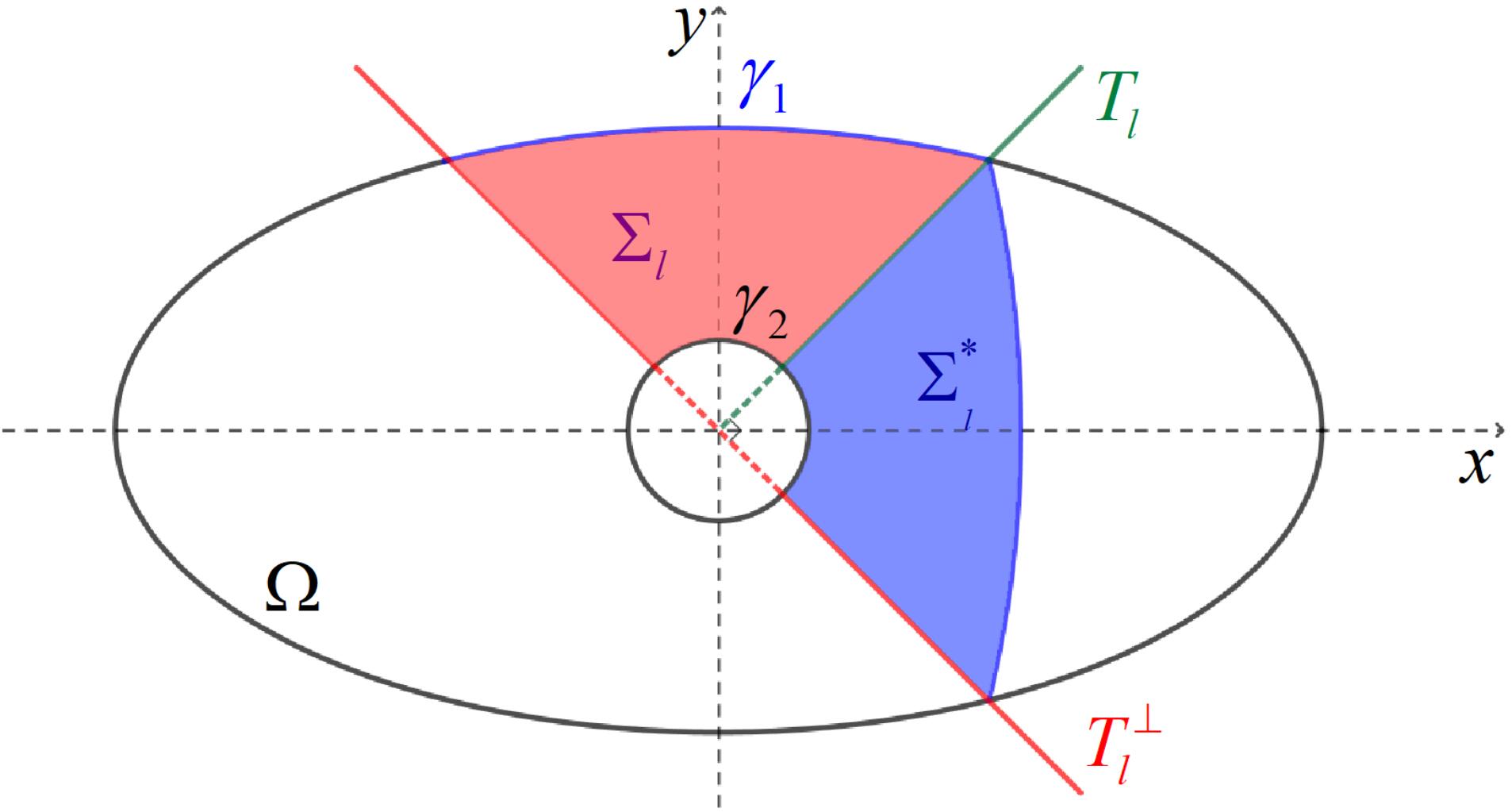}\\
  \scriptsize {\bf Fig. 10.}~~The domains $\sum_l$ and $\sum^\ast_l$.
\end{center}

Now we define
\begin{equation*}\label{}\begin{array}{l}
\left\{
\begin{array}{l}
\sum_l=\{p=(x,y)\in\Omega: y>kx~\mbox{and}~y>\frac{-1}{k}x\},\\
T_l=\{p=(x,y)\in\Omega: y=kx, k=\tan\theta, \theta\in(0,\frac{\pi}{2}), x>0\},\\
\sum^\ast_l=\mbox{the reflection domain of}~\sum_l~\mbox{with respect to}~T_l,\\
p^\ast_l=(x^\ast,y^\ast)~\mbox{is the reflection point of }p\in\sum_l~\mbox{with respect to}~T_l.
\end{array}
\right.
\end{array}\end{equation*}

In $\sum_l$, we denote by
\begin{equation*}\label{}\begin{array}{l}
v_l(p)=u(p^\ast_l)-u(p)~\mbox{in}~\sum_l.
\end{array}\end{equation*}

Then, using a proof similar to case $C_1$, we can reach that the solution $u$ can not have critical points on any $T_l,$ namely, the solution $u$ has no extra critical points in the first quadrant domain of $\Omega$. Similarly, $u$ also has no extra critical points in the other quadrant domain of $\Omega$. Furthermore, by Theorem B, we know that $u$ only on each symmetric axis has exactly two critical points, namely, $u$ has four critical points, two of them are the maximum points and the other two are the saddle points, and two maximum points and two saddle points of the solution $u$ are distributed on the long symmetric semi-axis and short symmetric semi-axis respectively. This completes the proof of Theorem C.\end{proof}

\begin{Remark}\label{rem3.4}
For some special exterior petal-like domain $\Omega$, namely, we make any ray $l$ through the origin of any asymmetric axis, and then make a vertical line $l^\perp$ of this ray  $l$ through the origin, if the small sub-domain surrounded by $l$,  $l^\perp$ and $\partial\Omega$ can be completely symmetric about ray $l$ into $\Omega$, the exact number of critical points of solution $u$ in such domains can be obtained by using the same method as above. We know that if such a domain $\Omega$ has $k$ ($k$ must be an even) symmetric axes, then $u$ has only $2k$ critical points in $\Omega$, and $k$ maximum points and $k$ saddle points of the solution $u$ are distributed on the long symmetric semi-axis and short symmetric semi-axis respectively. Theorem A and Theorem B show that the critical point set of the solution $u$ is unstable when the exterior boundary $\gamma_E$ of planar concentric circle annular domain occurs small deformation or minor movement. The critical point set $\mathcal{C}(u)$ changes from a Jordan curve to finitely many isolated points.
\end{Remark}

\begin{Remark}\label{rem3.5}
If the condition of $f(u)\geq 0$ is replaced by the condition of $f(u)\leq 0$ in all theorems, the above all conclusions still hold. In addition, by the strong maximum principle, we know that the solutions to (\ref{1.1}) in Theorem A, Theorem B and Theorem C do not have minimum points. The result of $\sharp\{\mbox{maximum~points~of~u}\}-\sharp\{\mbox{saddle~points~of~u}\}=0$ in \cite{AlessandriniMagnanini1} implies that the number of maximum points and saddle points is equal in Theorem A, Theorem B and Theorem C .
\end{Remark}

\section{Geometric location of critical points}

~~~~In this section, we will give the geometric location of the critical point sets, that is, Theorem D. Firstly, we will present the proof of the geometric location of critical points of a solution $u$ to (\ref{1.1}) in a particular exterior petal-like domain $\Omega\subset \mathbb{R}^2$ by moving plane method and moving sphere method, where the radius of the interior boundary $\gamma_I$ is $a$, and the length of each semi-axis of the exterior boundary $\gamma_E$ is $b_i$ ($i=1,2$) respectively.

\begin{proof}[\bf Proof of (1) of Theorem D] We need to give the geometric location of critical points using the information of the interior and exterior boundaries respectively. We divide the proof into two steps.

{\bf Step 1.} Firstly, we use the exterior boundary information to give the location distribution of critical points by moving plane method, that is
 \begin{equation}\label{4.1}\begin{array}{l}
\mathcal{C}(u)\subset\big\{(-\frac{a+b_1}{2},\frac{a+b_1}{2})\times(-\frac{a+b_2}{2},\frac{a+b_2}{2})\big\}\cap\Omega.
 \end{array}\end{equation}

Without loss of generality, we only prove the case on the positive $x$ semi-axis. Write $p=(x,y)\in \Omega$ for any $y\in \mathbb{R}.$ For $\lambda\in [\frac{a+b_1}{2},b_1)$, we define
\begin{equation*}\label{}\begin{array}{l}
\left\{
\begin{array}{l}
\sum_\lambda=\{p=(x,y)\in\Omega: x>\lambda\},\\
T_\lambda=\{p=(x,y)\in\Omega: x=\lambda\},\\
\sum^\prime_\lambda=\mbox{the~reflection~domain~of} ~\sum_\lambda \mbox{with~ respect~ to}~T_\lambda ,\\
p_\lambda=(2\lambda-x,y).
\end{array}
\right.
\end{array}\end{equation*}

In domain $\sum_\lambda$, we set
 \begin{equation*}\label{}\begin{array}{l}
 w_\lambda(p)=u(p)-u(p_\lambda)~\mbox{for~any}~p\in\sum_\lambda.
 \end{array}\end{equation*}

By the differential mean value theorem and  (\ref{1.1}), we obtain that
\begin{equation*}\label{}\begin{array}{l}
\left\{
\begin{array}{l}
\triangle w_\lambda+c(x,\lambda)w_\lambda=0~\mbox{in}~\sum_\lambda,\\
w_\lambda<0~\mbox{on}~\partial\sum_\lambda \setminus T_\lambda,\\
w_\lambda\equiv 0~\mbox{on}~\partial\sum_\lambda\cap T_\lambda,
\end{array}
\right.
\end{array}\end{equation*}
where $c(x,\lambda)$ is a bounded function in $\sum_\lambda$.

Next we claim that
 \begin{equation}\label{4.2}\begin{array}{l}
 w_\lambda<0~\mbox{in}~\sum_\lambda~\mbox{for~any}~ \lambda\in[\frac{a+b_1}{2},b_1).
 \end{array}\end{equation}
This implies in particular that $w_\lambda$ obtains along $\partial\sum_\lambda\cap T_\lambda$ its maximum in $\overline{\sum_\lambda}$.

For any $\lambda$ close to $b_1$, by Proposition 2.13 (the maximum principle for a narrow domain) in \cite{Han}, we have $w_\lambda<0.$ Suppose that $[\lambda_0,b_1)$ is the largest interval of values of $\lambda$ such that $w_\lambda<0$ in $\sum_\lambda$. Next we want to prove that $\lambda_0=\frac{a+b_1}{2}.$ If $\lambda_0>\frac{a+b_1}{2}$, by continuity, we have $w_{\lambda_0}\leq 0$ in $\sum_{\lambda_0}$ and $w_{\lambda_0}\not\equiv 0$ on $\partial\sum_{\lambda_0}$. Serrin's comparison principle \cite{Serrin} implies that $w_{\lambda_0}< 0$ in $\sum_{\lambda_0}$. We will prove that
\begin{equation*}\label{}\begin{array}{l}
w_{\lambda_0-\varepsilon}< 0 ~\mbox{in}~\sum_{\lambda_0-\varepsilon},
 \end{array}\end{equation*}
for any small $\varepsilon>0$.

Fix $\sigma>0$ (the following will be determined). Let $\mathbb{W}$ be a closed subset in $\sum_{\lambda_0}$ such that $|\sum_{\lambda_0}\backslash\mathbb{W}|<\frac{\sigma}{2}$. Then $w_{\lambda_0}< 0$ in $\sum_{\lambda_0}$ implies that

\begin{equation*}\label{}\begin{array}{l}
w_{\lambda_0}(p)\leq -\alpha<0~\mbox{for~any}~p\in\mathbb{W}~\mbox{and~some}~\alpha>0.
 \end{array}\end{equation*}
According to the continuity, we get
\begin{equation}\label{4.3}\begin{array}{l}
w_{\lambda_0-\varepsilon}< 0 ~\mbox{in}~\mathbb{W}.
 \end{array}\end{equation}
We choose $\varepsilon>0$ to be small enough that$|\sum_{\lambda_0-\varepsilon}\backslash\mathbb{W}|<\sigma$ holds. We choose $\sigma$ in such a way that we can apply  Theorem 2.32 (the maximum principle for a domain with small volume) in \cite{Han} to $w_{\lambda_0-\varepsilon}$ in $\sum_{\lambda_0-\varepsilon}\backslash\mathbb{W}$. Then we have that
 \begin{equation*}\label{}\begin{array}{l}
 w_{\lambda_0-\varepsilon}\leq 0~\mbox{in}~ \sum_{\lambda_0-\varepsilon}\backslash\mathbb{W}.
 \end{array}\end{equation*}
 In addition, by Serrin's comparison principle \cite{Serrin}, we achieve that
 \begin{equation}\label{4.4}\begin{array}{l}
 w_{\lambda_0-\varepsilon}< 0~\mbox{in}~ \sum_{\lambda_0-\varepsilon}\backslash\mathbb{W}.
 \end{array}\end{equation}
By (\ref{4.3}) and (\ref{4.4}), this contradicts the choice of $\lambda_0$, therefore
\begin{equation*}\label{}\begin{array}{l}
 w_\lambda<0~\mbox{in}~\sum_\lambda~\mbox{for~any}~ \lambda\in[\frac{a+b_1}{2},b_1).
 \end{array}\end{equation*}

By $w_\lambda\equiv 0$ on $\partial\sum_\lambda\cap T_\lambda$ and Hopf Lemma, we obtain that
\begin{equation*}\label{}\begin{array}{l}
D_{x}w_\lambda|_{x=\lambda}=2D_{x}u|_{x=\lambda}<0 ~\mbox{for~any}~\lambda\in[\frac{a+b_1}{2},b_1).
\end{array}\end{equation*}
This means that the solution $u$ does not exist any critical points in domain  $\{[\frac{a+b_1}{2},b_1)\times(-b_2,b_2)\}\cap\Omega$. Similarly, we can prove the case on the negative $x$ semi-axis, positive and negative $y$ semi-axis, respectively. In this step, we show that all the critical points of solution $u$ are distributed within the following domain $$\big\{(-\frac{a+b_1}{2},\frac{a+b_1}{2})\times(-\frac{a+b_2}{2},\frac{a+b_2}{2})\big\}\cap\Omega.$$

{\bf Step 2.} Next, we present the geometric location of critical points using the interior boundary information by the moving sphere method, a variant of the moving plane method, namely
\begin{equation}\label{4.5}\begin{array}{l}
\mathcal{C}(u)\subset\Big\{\Omega\setminus\big\{(x,y):a<\sqrt{x^2+y^2}\leq \sqrt{ac}\big\}\Big\},
\end{array}\end{equation}
where $c$ is the radius of inner cutting ball (circle) of $\Omega_{\gamma_E}$, and $\Omega_{\gamma_E}$ is the domain bounded by the exterior boundary $\gamma_E$.

Write $p=(x,y)\in \Omega\subset\mathbb{R}^2$. For $\lambda\in (a,\sqrt{ac}]$, we define
\begin{equation*}\label{}\begin{array}{l}
\left\{
\begin{array}{l}
\sum^\lambda=\{p=(x,y)\in\Omega: a<|p|<\lambda\},\\
T^\lambda=\{p=(x,y)\in\Omega: |p|=\lambda\},\\
p^\lambda=\frac{\lambda^2p}{|p|^2},\\
{\sum^{\lambda}}^\prime=\{p^\lambda:\mbox{the~Kelvin~transform~domain~of} ~\sum^\lambda \mbox{with~respect~to}~T^\lambda,~\mbox{where}~ p\in\sum^\lambda\}.
\end{array}
\right.
\end{array}\end{equation*}

In domain $\sum^\lambda$, after some calculations \cite{Li2004}, we have
$$\triangle_p u(p^\lambda)+\frac{\lambda^4}{|x|^4}f(u(p^\lambda))=0.$$

According to (\ref{1.1}), we get
\begin{equation*}\label{}\begin{array}{l}
\begin{array}{l}
\triangle_p(u(p)-u(p^\lambda))+f(u(p))-\frac{\lambda^4}{|x|^4}f(u(p^\lambda))=0~\mbox{in}~\sum^\lambda,
\end{array}
\end{array}\end{equation*}
it means that
\begin{equation*}\label{}\begin{array}{l}
\begin{array}{l}
\triangle_p(u(p)-u(p^\lambda))+f(u(p))-f(u(p^\lambda))+(1-\frac{\lambda^4}{|x|^4})f(u(p^\lambda))=0~\mbox{in}~\sum^\lambda.
\end{array}
\end{array}\end{equation*}

By the definition of $\sum^\lambda$ and the condition of $f(u)\geq 0$, we obtain that
\begin{equation}\label{4.6}\begin{array}{l}
\begin{array}{l}
\triangle_p(u(p)-u(p^\lambda))+f(u(p))-f(u(p^\lambda))\geq 0~\mbox{in}~\sum^\lambda.
\end{array}
\end{array}\end{equation}

Now we define
 \begin{equation*}\label{}\begin{array}{l}
\psi_\lambda(p)=u(p)-u(p^\lambda)~\mbox{for~any}~p\in\sum^\lambda.
 \end{array}\end{equation*}

By the differential mean value theorem and  (\ref{1.1}), we have that
\begin{equation*}\label{}\begin{array}{l}
\left\{
\begin{array}{l}
\triangle_p \psi_\lambda+d(x,\lambda)\psi_\lambda\geq 0~\mbox{in}~\sum^\lambda,\\
\psi_\lambda<0~\mbox{on}~\partial\sum^\lambda \setminus T^\lambda,\\
\psi_\lambda\equiv 0~\mbox{on}~\partial\sum^\lambda\cap T^\lambda,
\end{array}
\right.
\end{array}\end{equation*}
where $d(x,\lambda)$ is a bounded function in $\sum^\lambda$.

Next we claim that
 \begin{equation}\label{4.7}\begin{array}{l}
 \psi_\lambda<0~\mbox{in}~\sum^\lambda~\mbox{for~any}~ \lambda\in(a,\sqrt{ac}].
 \end{array}\end{equation}
This implies in particular that $\psi_\lambda$ obtains along $\partial\sum^\lambda\cap T^\lambda$ its maximum in $\overline{\sum^\lambda}$.

For $\lambda$ close to $a$, by Theorem 2.32 (the maximum principle for a domain with small volume) in \cite{Han}, we have $\psi_\lambda<0.$ Suppose that $(a,\lambda_0]$ is the largest interval of values of $\lambda$ such that $\psi_\lambda<0$ in $\sum^\lambda$. Next we need prove that $\lambda_0=\sqrt{ac}.$ If $\lambda_0<\sqrt{ac}$, by continuity, we have that $\psi_{\lambda_0}\leq 0$ in $\sum^{\lambda_0}$ and $\psi_{\lambda_0}\not\equiv 0$ on $\partial\sum^{\lambda_0}$. By Serrin's comparison principle, we have $\psi_{\lambda_0}< 0$ in $\sum^{\lambda_0}$.
Similar to the proof of (\ref{4.2}), by the maximum principle for a domain with small volume \cite{Han} and Serrin's comparison principle \cite{Serrin}, we can easily obtain
\begin{equation}\label{4.8}\begin{array}{l}
\psi_{\lambda_0+\varepsilon}< 0 ~\mbox{in}~\sum^{\lambda_0+\varepsilon},
 \end{array}\end{equation}
for any small $\varepsilon>0$. This contradicts the choice of $\lambda_0$, then we have
\begin{equation*}\label{}\begin{array}{l}
\psi_\lambda<0~\mbox{in}~\sum^\lambda~\mbox{for~any}~ \lambda\in(a,\sqrt{ac}].
 \end{array}\end{equation*}

By $\psi_\lambda\equiv 0$ on $\partial\sum^\lambda\cap T^\lambda$ and Hopf Lemma, we have
$$\frac{\partial \psi_\lambda(p)}{\partial\nu}|_{|p|=\lambda}>0,$$
and noting that
\begin{equation*}\label{}\begin{array}{l}
\frac{\partial u(p^\lambda)}{\partial\nu}|_{|p^\lambda|=\lambda}=-\frac{\partial u(p)}{\partial\nu}|_{|p|=\lambda},~\frac{\partial u(p)}{\partial\nu}|_{|p|=\lambda}=\frac{1}{2}\frac{\partial \psi_\lambda(p)}{\partial\nu}|_{|p|=\lambda}>0,
\end{array}\end{equation*}
for any $\lambda\in(a,\sqrt{ac}],$ where $\nu$ is the outward normal vector.

 Hence the solution $u$ does not exist any critical points in domain $\big\{(x,y):a<\sqrt{x^2+y^2}\leq \sqrt{ac}\big\}$. In this step, we show that all the critical points of solution $u$ are distributed within the following domain
 $$\Big\{\Omega\setminus\big\{(x,y):a<\sqrt{x^2+y^2}\leq \sqrt{ac}\big\}\Big\}.$$

 Combining (\ref{4.1}) and (\ref{4.5}) , we show that the critical point set
  \begin{eqnarray*}\label{}
\mathcal{C}(u)\subset\big\{(-\frac{a+b_1}{2},\frac{a+b_1}{2})\times(-\frac{a+b_2}{2},\frac{a+b_2}{2})\big\} \\
\cap\Big\{\Omega\setminus\big\{(x,y):a<\sqrt{x^2+y^2}\leq \sqrt{ac}\big\}\Big\}.
\end{eqnarray*}\end{proof}

\noindent {\bf Example 1.} {\it The geometric distribution of level sets of the solution to the following boundary problem, see Fig. 11.}
\begin{equation*}\label{}\begin{array}{l}
\left\{
\begin{array}{l}
-\triangle u=1~\mbox{in}~\Omega=\big\{(x,y):\{\frac{x^2}{36}+\frac{y^2}{16}<1\}\setminus \{x^2+y^2\leq 1\} \big\},\\
u=0~\mbox{on}~\{(x,y):\frac{x^2}{36}+\frac{y^2}{16}=1\},\\
u=0~\mbox{on}~\{(x,y):x^2+y^2=1\}.
\end{array}
\right.
\end{array}\end{equation*}
\begin{center}
  \includegraphics[width=8cm,height=5.5cm]{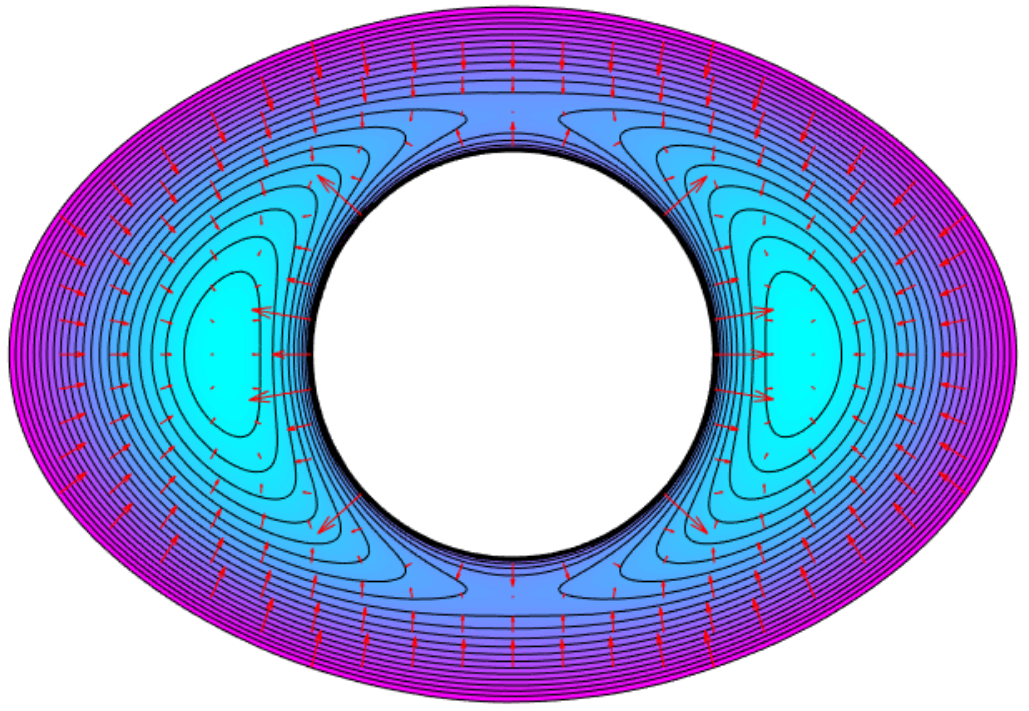}\\
  \scriptsize {\bf Fig. 11.}~~Top view of the geometric distribution of level sets of solution $u$ in an exterior petal-like domain $\Omega$ .
\end{center}

In the rest of this section, we will present the proof of the geometric location of critical points of a solution $u$ in an eccentric circle annular domain $\Omega\subset \mathbb{R}^2$, where the interior boundary $\gamma_I$ is a circle with its radius $r$ centered at $(a,0)(a>0)$, and the exterior boundary $\gamma_E$ also is a circle with its radius $R$ centered at (0,0) such that $a+r<R$.

\begin{proof}[\bf Proof of (2) of Theorem D] We will give the geometric location of critical points using the information of the interior and exterior boundaries, respectively. We divide the proof into two steps.

{\bf Step 1.} Firstly, similar to the step 1 of the proof of (1) of Theorem D, we use the exterior boundary information to give the location distribution of critical points by moving plane method, namely
 \begin{equation}\label{4.9}\begin{array}{l}
\mathcal{C}(u)\subset\big\{(-\frac{-R+a-r}{2},\frac{a+r+R}{2})\times(-R,R)\big\}\cap\Omega.
 \end{array}\end{equation}

{\bf Step 2.} Similar to the step 2 of the proof of (1) of Theorem D, we present the location distribution of critical points using the interior boundary information by the moving sphere method, a variant of the moving plane method, that is
\begin{equation}\label{4.10}\begin{array}{l}
\mathcal{C}(u)\subset\Big\{\Omega\setminus\big\{(x,y):r<\sqrt{(x-a)^2+y^2}\leq \sqrt{r(R-a)}\big\}\Big\}.
\end{array}\end{equation}

 Combining (\ref{4.9}) and (\ref{4.10}), we show that the critical point set
\begin{equation*}\label{}\begin{array}{l} \mathcal{C}(u)\subset\Big\{(\frac{-R+a-r}{2},\frac{a+r+R}{2})\times(-R,R)\Big\}\cap\\ ~~~~~~~~~~\Big\{\Omega\setminus\big\{(x,y):r<\sqrt{(x-a)^2+y^2}\leq \sqrt{r(R-a)}\big\}\Big\}.
\end{array}\end{equation*}
\end{proof}

\noindent {\bf Example 2.} {\it We consider the geometric distribution of level sets of the solution to the following boundary problem, see Fig. 12.}
\begin{equation*}\label{}\begin{array}{l}
\left\{
\begin{array}{l}
-\triangle u=1~\mbox{in}~\Omega=\big\{(x,y):\{x^2+y^2\leq (0.8)^2\}\setminus \{(x-0.3)^2+y^2\leq (0.2)^2\} \big\},\\
u=0~\mbox{on}~\{(x,y):x^2+y^2=(0.8)^2\},\\
u=0~\mbox{on}~\{(x,y):(x-0.3)^2+y^2=(0.2)^2\}.
\end{array}
\right.
\end{array}\end{equation*}
\begin{center}
  \includegraphics[width=5cm,height=5cm]{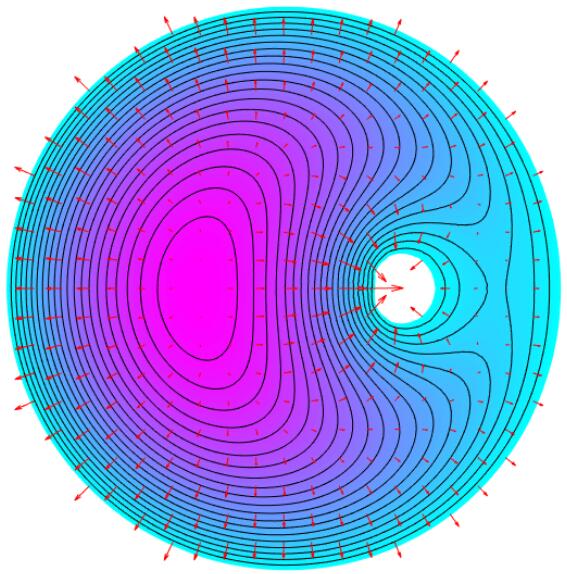}\\
  \scriptsize {\bf Fig. 12.}~~Top view of the geometric distribution of level sets of solution $u$ in an eccentric circle annular domain $\Omega$.
\end{center}

\begin{Remark}\label{rem4.1}
For general exterior petal-like domains $\Omega\subset\mathbb{R}^n(n\geq 2)$ and eccentric ball annular domains $\Omega\subset\mathbb{R}^n(n\geq 3)$, we can also use the interior and exterior boundary information to obtain the geometric location of the critical point set by moving plane method and moving sphere method, respectively.
\end{Remark}

\noindent  \textbf{Acknowledgement.} The first author is grateful to Jun Zou for his helpful conversations about Theorem C.

\end{document}